\newtheorem{theorem}{Theorem}
\newtheorem*{theorem*}{Theorem}
\newtheorem{lemma}[theorem]{Lemma}
\newtheorem{letteredtheorem}{Theorem}
\theoremstyle{definition}
\theoremstyle{remark}
\newcommand{\Chat}{\widehat{\mathbb{C}}}
\begin{document}

\title{Intrinsic circle domains}
\author{Edward Crane}
\date{\today}
\address{Heilbronn Institute for Mathematical Research\\
School of Mathematics\\
University of Bristol \\
BS8 1TW\\
United Kingdom}
\email{edward.crane@bristol.ac.uk}

\keywords{Circle domains, hyperbolic metric, circle packing, conformal welding}
\subjclass{Primary 30C20; Secondary 30F45, 30C30, 52C26}



\begin{abstract}
 Using quasiconformal mappings, we prove that any Riemann surface of finite
 connectivity and finite genus is  conformally equivalent to an intrinsic
 circle domain $\Omega$ in a compact Riemann surface $S$. This means that each
 connected component $B$ of $S\setminus \Omega$  is either a point or a closed
 geometric disc with respect to the complete constant curvature conformal
 metric of the Riemann surface $(\Omega \cup B)$. Moreover the pair $(\Omega,
 S)$ is unique up to conformal isomorphisms. We give a generalization to
 countably infinite connectivity. Finally we show how one can compute numerical approximations to intrinsic circle domains using circle packings and conformal welding.
\end{abstract}

\maketitle

\begin{center}\emph{Accepted for publication in Conformal Geometry and
    Dynamics}\end{center}

\section{Introduction}
Let $\Omega$ be a finitely connected domain in the Riemann sphere $\Chat$. A classical theorem of Koebe states that $\Omega$ is conformally equivalent to the
complement of a finite set of pairwise disjoint closed discs and points. Such
a domain is called a finitely connected \emph{circle domain}. Koebe's theorem
was extended by He and Schramm \cite{HS} to apply to domains with countably
many complementary components. Schramm later gave a different proof of
this result using transboundary extremal length \cite{S}. 
\begin{letteredtheorem}{\cite[Theorem 0.1]{HS}}\label{T: HS1}
Let $\Omega$ be a domain in $\Chat$ such that boundary $\partial\Omega$ has at
most countably many components. Then $\Omega$ is conformally homeomorphic to a circle domain $\Omega^{*}$ in $\Chat$. Moreover, $\Omega^{*}$ is unique up to M\"{o}bius transformations and every conformal automorphism of $\Omega^{*}$ is the restriction of a M\"{o}bius transformation.
\end{letteredtheorem}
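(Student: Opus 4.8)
The plan follows the strategy of He and Schramm: derive \emph{existence} from Koebe's classical theorem by an exhaustion argument, and \emph{uniqueness} by a reflection argument. If $\partial\Omega$ has only finitely many components the assertion is Koebe's theorem, so assume there are countably infinitely many, and list the complementary components $B_1,B_2,\dots$ in order of decreasing diameter. Let $\Omega_n$ be the finitely connected domain obtained from $\Omega$ by filling in all $B_j$ with $j>n$; then $\Omega\subset\Omega_{n+1}\subset\Omega_n$ for every $n$ and $\bigcap_n\Omega_n=\Omega$. Koebe's theorem supplies circle domains $\Omega_n^{*}\subset\Chat$ and conformal homeomorphisms $f_n\colon\Omega_n\to\Omega_n^{*}$; using the M\"obius ambiguity in Koebe's theorem, normalize $f_n$ by prescribing its values at three fixed points of $\Omega$.

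Next, a compactness step. With the normalization in place, $\{f_n\}$ is a normal family of meromorphic functions on $\Omega$; extract a subsequence converging locally uniformly (in the spherical metric) to a map $f$. Since $f$ takes the three prescribed values it is nonconstant, hence univalent by Hurwitz's theorem. Put $\Omega^{*}:=f(\Omega)$, a domain conformally homeomorphic to $\Omega$ via $f$.

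The crux is to identify $\Omega^{*}$ as a circle domain, i.e.\ to show that passing to the limit neither loses, merges, creates, nor distorts complementary components. For fixed $k$ and $n\ge k$, $f_n$ maps the $k$-th complementary component of $\Omega_n$ onto a boundary circle $C_n^{k}$ of $\Omega_n^{*}$; after a diagonal extraction, each sequence $(C_n^{k})_n$ converges in the Hausdorff metric to a round circle or a point $C^{k}$, because a limit of round circles of bounded radius is round or a point. One must then verify that the $C^{k}$ are pairwise disjoint and are exactly the non-singleton components of $\Chat\setminus\Omega^{*}$, that $f$ extends to a homeomorphism $\overline\Omega\to\overline{\Omega^{*}}$ carrying $\partial\Omega$ onto $\partial\Omega^{*}$ so that nothing spurious is created, and --- this I expect to be \textbf{the main obstacle} --- the \emph{non-degeneracy} statement that a complementary component $B$ of $\Omega$ with more than one point cannot collapse to a point in the limit. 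Non-degeneracy should follow from an extremal-length estimate: such a $B$ can be separated from $\partial\Omega\setminus B$ by an annulus $A\subset\Omega$ of definite conformal modulus, and since the modulus of an annulus is a conformal invariant, $f_n(A)$ has that same modulus for all large $n$, which keeps the diameter of $C_n^{k}$ bounded below relative to the scale on which $f_n(A)$ lives, so $C^{k}$ is a genuine circle. The countability hypothesis enters when one runs these estimates over all components at once --- plausibly by a transfinite induction on the complementary components taken in order of size --- and uses it to conclude that $\Omega^{*}$ together with the closed round discs bounded by the $C^{k}$ covers $\Chat$, i.e.\ that $\Omega^{*}$ really is a circle domain.

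For uniqueness and the rigidity of automorphisms, let $g\colon\Omega^{*}\to\Omega^{**}$ be a conformal homeomorphism between circle domains (the automorphism statement is the case $\Omega^{**}=\Omega^{*}$). As above, $g$ extends to a homeomorphism of closures carrying boundary circles and points to boundary circles and points. Now reflect: across each boundary circle $C$ of $\Omega^{*}$, with $C'=g(C)$, extend $g$ to the reflected copy $R_C(\Omega^{*})$ by $R_{C'}\circ g\circ R_C$, where $R_C$ and $R_{C'}$ are the anticonformal reflections in $C$ and $C'$; these extensions are mutually consistent because overlaps of reflected copies are again reflected copies. Iterating produces a conformal homeomorphism between increasing unions of circle domains $\widehat\Omega^{*}=\bigcup_k\Omega^{*(k)}$ and $\widehat\Omega^{**}$. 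The residual set $\Lambda=\Chat\setminus\widehat\Omega^{*}$ is totally disconnected, and the final task is to show that it is negligible: ideally that $\Lambda$ has zero area, or at least that it is conformally removable for the bounded map $g$ because it is thin --- contained in a countable union of analytic circles together with the orbit under the reflection group of a small residual set --- with the countability hypothesis once more doing the work. Removability then extends $g$ conformally across $\Lambda$ to a conformal automorphism of $\Chat$, that is, a M\"obius transformation. This last, measure-theoretic removability step is the subtle part of the uniqueness half.
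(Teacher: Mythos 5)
This statement is He and Schramm's Theorem~0.1, which the paper quotes from \cite{HS} and does not prove, so there is no internal proof to compare against; I can only assess your proposal against the known proofs (He--Schramm's fixed-point-index argument and Schramm's later transboundary extremal length argument). Your skeleton --- exhaustion plus Koebe for existence, Schottky reflection plus removability for uniqueness --- is the classical strategy that works in the \emph{finitely} connected case, but at both places where you flag ``the subtle part'' you have in fact deferred the entire content of the theorem, and in the uniqueness half the deferred step is not merely subtle but is precisely what the classical method cannot deliver in the countable case.

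Concretely: for uniqueness, after reflecting across all boundary circles the residual set $\Lambda$ is the limit set of an infinitely generated group of reflections. For finitely many circles one shows by an explicit distortion estimate that $\Lambda$ has area zero, hence is removable, and this is Koebe's argument; for countably many circles that estimate breaks down, the area-zero (or conformal removability) claim is not available in general, and it was exactly to circumvent this that He and Schramm introduced their fixed-point index argument (and Schramm later the transboundary extremal length method). Writing ``removability then extends $g$'' assumes the theorem's hardest point. For existence, your normal-families limit leaves unproved that distinct complementary discs do not merge or become tangent in the limit, that no spurious boundary components appear at accumulation points of the $B_j$, and that non-degeneracy holds uniformly over infinitely many components; the actual proof handles the countable case by transfinite induction on the Cantor--Bendixson rank of the space of boundary components (not on their order by diameter) and uses the rigidity statement as an input to the existence proof rather than proving the two halves independently. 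As it stands the proposal is a correct outline of the finitely connected case together with an accurate list of what remains to be done in the countable case, rather than a proof of it.
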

If $\Omega$ is a domain in $\Chat$ that has at least three complementary
components, or at least one complementary component that is not a puncture,
then we say that $\Omega$ is \emph{hyperbolic}. The reason is that there is
then an unbranched analytic covering map from the unit disc $\mathbb{D}$ onto
$\Omega$, which can be used to transfer the Poincar\'{e} metric on
$\mathbb{D}$ to a complete conformal metric of constant curvature $-1$ on
$\Omega$, which is called the hyperbolic metric. 

Every ring domain is conformally equivalent to a round annulus, a punctured
disc, or the punctured plane. For a round annulus $\Chat \setminus
(B_1 \cup B_2)$, each complementary component $B_i$ is a spherical disc, i.e.~a closed ball in
the spherical metric. Observe that $B_i$ is
also a closed ball with respect to the hyperbolic metric belonging to the
domain $\Omega \cup B_i$, which is a larger spherical disc. In this paper we
generalize this property to obtain a new canonical form for multiply connected domains.  


\begin{theorem}\label{T: main}
Let $\Omega$ be a finitely connected domain in the Riemann sphere $\Chat$ with
complementary components $K_1, \dots, K_n$. Suppose that each domain $\Omega \cup
K_i$ is hyperbolic. Then $\Omega$ is conformally equivalent to a domain $\Omega^*$ in $\Chat$ with complementary components $L_1, \dots, L_n$ such that for each
$i=1, \dots, n$, $\Omega \cup L_i$ is hyperbolic and either $L_i$ is a puncture or $L_i$ is a closed disc with respect to the hyperbolic metric of $\Omega \cup L_i$. Moreover, $\Omega^*$ is unique up to M\"{o}bius transformations.
\end{theorem}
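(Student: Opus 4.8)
The plan is to obtain $\Omega^*$ as the limit of a quasiconformal ``rounding'' iteration that preserves the conformal type at every stage up to a controlled, summable error, and to deduce uniqueness from a rigidity argument of the type used for Theorem~\ref{T: HS1}. By Koebe's theorem we may take an initial circle domain $\Omega^{(0)}\subset\Chat$ conformally equivalent to $\Omega$, whose complementary components are spherical discs and points; for $n\ge 3$ these are in general not intrinsic discs, so a genuine deformation is needed. (Observe that $n\ge 2$, since $\Omega\cup K_1=\Chat$ is not hyperbolic.) Suppose inductively that $\Omega^{(k)}\subset\Chat$ is conformally equivalent to $\Omega$ with complementary components $K^{(k)}_1,\dots,K^{(k)}_n$; since the topological type is unchanged, each $D^{(k)}_i:=\Omega^{(k)}\cup K^{(k)}_i$ is hyperbolic and carries its hyperbolic metric $\rho_i$. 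For each non-degenerate $i$, let $\varepsilon^{(k)}_i$ measure the failure of $K^{(k)}_i$ to be a round $\rho_i$-ball, say the difference of the radii of the smallest $\rho_i$-ball containing $K^{(k)}_i$ and the largest one contained in it, about a common centre. One then modifies $\Omega^{(k)}$, one component at a time, by pushing $K^{(k)}_i$ outward so that it becomes an honest closed $\rho_i$-ball, taking a small step when $\varepsilon^{(k)}_i$ is large so as always to make a gentle modification; the effect on $\Omega^{(k)}$ is realized by a quasiconformal homeomorphism equal to the identity outside a collar of $K^{(k)}_i$, with dilatation tending to $1$ as the step size tends to $0$. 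After such a modification the $i$-th complementary component is an intrinsic disc, while the conformal type has changed by a quasiconformal amount bounded in terms of $\varepsilon^{(k)}_i$.

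The crux is to show that the total defect $E^{(k)}:=\sum_i\varepsilon^{(k)}_i$ tends to $0$ and that the domains $\Omega^{(k)}$, normalized by M\"obius transformations, converge to a limit $\Omega^*$; since $\Omega^*$ is then conformally equivalent to $\Omega$ and all defects vanish, $\Omega^*$ is an intrinsic circle domain. Rounding the $i$-th component reduces $\varepsilon_i$ but, by enlarging $K_i$, shrinks every $D_j$ with $j\ne i$ and so, by monotonicity of the hyperbolic metric, perturbs the other defects. The key estimate is that in the small-defect regime this perturbation is a small fraction of the gain, so that $E^{(k)}$ contracts over each full cycle; here one uses that the $\Omega^{(k)}$ have uniformly bounded geometry — they are all conformally equivalent to the fixed surface $\Omega$, so modulo M\"obius they lie in a compact part of moduli space — which keeps the relevant distortion estimates uniform, and that the connectivity is finite. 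I expect this quantitative stability of the uniformizing hyperbolic metric of a finitely connected domain under a quasiconformal perturbation supported near a single boundary component — provable by standard length--area and distortion arguments, but needing care to keep constants uniform, to ensure punctures stay punctures, and to keep the modifying balls embedded topological discs — to be the main obstacle. Once it is in place, $E^{(k)}\to 0$, the accumulated quasiconformal distortions are summable, the normalized $\Omega^{(k)}$ are precompact, and any subsequential limit works. An alternative to the explicit iteration is a continuity argument: intrinsic circle domains of the given combinatorial type form a finite-dimensional family, parametrized by the centres and radii of the non-degenerate components and the positions of the punctures, and one shows the natural map to the moduli space of $\Omega$'s conformal type is a proper local homeomorphism; properness — degeneration forces a radius to $0$ or a ball to swallow another component, in either case changing the conformal type — again rests on the same compactness estimate.

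For uniqueness, let $\Omega^*_1,\Omega^*_2$ be intrinsic circle domains and $f\colon\Omega^*_1\to\Omega^*_2$ conformal; it extends to a homeomorphism of $\Chat$ carrying the components $(L^1_i)$ to $(L^2_i)$ in some order. Each $\partial L^j_i$ is a geodesic circle of the hyperbolic metric of $\Omega^*_j\cup L^j_i$, hence a smooth Jordan curve and in particular a quasicircle, so $f$ extends to a quasiconformal self-homeomorphism $F$ of $\Chat$ that is conformal off $\bigcup_i L^1_i$. That $F$ is in fact M\"obius should follow by a rigidity argument parallel to the uniqueness half of Theorem~\ref{T: HS1}: either a transboundary extremal-length comparison in the spirit of \cite{S}, arranged so that a round hyperbolic disc is the unique extremal filling of a given end and hence $F$ cannot distort the complementary components, or Schwarz reflection in the boundary circles combined with a normal-families argument; as a by-product one obtains that every conformal automorphism of an intrinsic circle domain is the restriction of a M\"obius transformation. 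This step needs its own input because the relevant reflections are no longer M\"obius, but it stays close to the classical circle-domain case; the decisive difficulty remains the quasiconformal stability estimate driving the existence proof.
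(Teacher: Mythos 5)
There are genuine gaps in both halves of your argument, and in each case the missing ingredient is precisely where the paper does something structurally different. On existence: your whole scheme rests on the unproved contraction estimate that rounding one component perturbs the defects of the others by only a small fraction of the gain, and you rightly flag this as the main obstacle --- but it cannot be deferred; without it the iteration does not close, and even with summable dilatations you would a priori obtain only a quasiconformal, not conformal, equivalence between $\Omega$ and the limit unless you further show the accumulated dilatation vanishes almost everywhere. The decisive observation you are missing is that no iteration is needed at all: the condition ``$L_i$ is a closed hyperbolic disc of $\Omega\cup L_i$'' depends only on the conformal structure of $\Omega\cup L_i$, so if one builds a single Beltrami coefficient $\mu$ supported on $\bigcup_i K_i$ of a Koebe circle domain and solves the Beltrami equation once, the correction made on $K_i$ has no effect on the conformal structure of $\Omega\cup K_j$ for $j\neq i$, because the solution is conformal on $\Omega$. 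The paper constructs $\mu|_{K_i}$ by uniformizing $\Omega\cup K_i$, applying Theorem~\ref{T: HS1} to the preimage of $\Omega$ in the universal cover to get an equivariant circle domain, and using Lemma~\ref{L: distortion bound} to produce an equivariant quasiconformal extension across the lifts of $K_i$; the interference problem that drives your iteration simply never arises.

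On uniqueness: you reduce to showing that a quasiconformal self-map of $\widehat{\mathbb{C}}$ that is conformal off the $L_i$ is M\"obius, and appeal to ``a rigidity argument parallel to'' Theorem~\ref{T: HS1}. But both mechanisms you name fail as stated: Schwarz reflection in $\partial L_i$ is not an anti-M\"obius involution because $\partial L_i$ is not a round circle in the sphere (it is a hyperbolic circle of $\Omega\cup L_i$, a curve of constant geodesic curvature, not even a geodesic), and Schramm's transboundary extremal length argument uses the roundness of the complementary discs in the ambient spherical metric in an essential way. The paper resolves this by lifting $f$ to the universal cover of $\Omega\cup L_i$: there the preimages of $L_i$ \emph{are} round discs --- that is exactly what the intrinsic-disc hypothesis provides --- so the lifted map is a conformal homeomorphism between genuine circle domains, the uniqueness part of Theorem~\ref{T: HS1} makes it the restriction of a M\"obius map, equivariance under the deck groups lets it descend to a conformal extension over $L_i$, and gluing these extensions over all $i$ yields the desired M\"obius map. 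Some reduction of this kind is required; as written, your rigidity step is an unsubstantiated claim.
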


\noindent We call the canonical domains given by this theorem \emph{intrinsic circle domains}.

Theorem~\ref{T: main} has a generalization to positive genus ambient surfaces
in place of $\Chat$, which we will now explain. In any Riemann surface $S$,
 a \emph{closed geometric disc} will mean a closed
ball of some positive radius with respect to the appropriate complete
conformal metric of constant curvature on $S$, (i.e. hyperbolic, Euclidean or
spherical), with the extra condition that it must be homeomorphic to the
closed unit disc. Equivalently, the radius of the ball must be strictly less than
the injectivity radius of the metric at the center of the disc.  A circle
domain $\Omega^*$ in $S$ is a connected open subset of $S$ for which each
complementary component is either a point or a closed geometric disc in
$S$. Via the uniformization theorem, He and Schramm extended Theorem~\ref{T: HS1} to this setting, as follows.
\begin{letteredtheorem}{\cite[Theorem 0.2]{HS}}\label{T: HS2}
Let $\Omega$ be an open Riemann surface with finite genus and at most
countably many ends. Then there is a closed Riemann surface $R^*$ such that
$\Omega$ is conformally homeomorphic to a circle domain $\Omega^*$ in
$R^*$. Moreover the pair $(R^*, \Omega^*)$ is unique up to conformal homeomorphisms.
\end{letteredtheorem}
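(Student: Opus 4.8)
The plan is to deduce the theorem from Theorem~\ref{T: HS1} by passing to the universal cover of an auxiliary closed surface. If $\Omega$ is already compact there is nothing to do, and if $\Omega$ has genus $0$ it is planar, hence by the planar case of the uniformization theorem it is conformally a domain in $\Chat$, and Theorem~\ref{T: HS1} applies directly with $R^{*}=\Chat$. So suppose $\Omega$ has genus $g\geq 1$. The first step is to realize $\Omega$ conformally as a subdomain of a \emph{closed} Riemann surface $R$ of genus exactly $g$ such that $R\setminus\Omega$ has at most countably many components and $\pi_{1}(\Omega)\to\pi_{1}(R)$ is surjective. One builds $R$ by localizing the genus in a compact core $\Omega_{0}\subseteq\Omega$ (possible since the genus is finite), so that each component of $\Omega\setminus\Omega_{0}$ is planar, and then capping each such planar end along its bounding analytic curve using Koebe's embedding of planar surfaces in $\Chat$: inside $\Chat$ one replaces the exterior of that curve by a glued-in closed topological disc. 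Then $R$ has genus $g$, it has only countably many components in its complement of $\Omega$ since $\Omega$ has countably many ends, $\pi_{1}(\Omega)\to\pi_{1}(R)$ is onto, and every component $K$ of $R\setminus\Omega$ lies inside one of these embedded discs and is in particular null-homotopic in $R$.

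Now let $\pi\colon\widetilde R\to R$ be the universal covering, with deck group $\Gamma\cong\pi_{1}(R)$. By uniformization $\widetilde R$ is $\mathbb{C}$ (if $g=1$) or $\mathbb{D}$ (if $g\geq 2$), so $\widetilde R$ is a simply connected proper subdomain of $\Chat$; write $B_{\infty}=\Chat\setminus\widetilde R$, a $\Gamma$-invariant point or closed round disc. Put $\widetilde\Omega=\pi^{-1}(\Omega)$, which is connected by the surjectivity above and is a domain in $\Chat$ whose complement $B_{\infty}\cup\pi^{-1}(R\setminus\Omega)$ has at most countably many components. Theorem~\ref{T: HS1} yields a conformal homeomorphism $\Phi$ of $\widetilde\Omega$ onto a circle domain $\widetilde\Omega^{*}\subseteq\Chat$, unique up to M\"obius maps, with every conformal automorphism of $\widetilde\Omega^{*}$ a M\"obius map. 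The deck group acts on $\widetilde\Omega$ by conformal automorphisms, so by this rigidity $\Gamma^{*}:=\Phi\Gamma\Phi^{-1}$ acts on $\widetilde\Omega^{*}$ by M\"obius transformations, freely and properly discontinuously, and $\Gamma^{*}\cong\Gamma$ is a closed surface group; in particular $\Gamma^{*}$ is discrete. The component $B_{\infty}$ corresponds under $\Phi$ to a $\Gamma^{*}$-invariant point or disc $B_{\infty}^{*}$, which (as $\Gamma^{*}$ is non-elementary, resp.\ a rank-two translation group) is its only invariant complementary component, so $\Gamma^{*}$ preserves the simply connected domain $\widetilde R^{*}:=\Chat\setminus B_{\infty}^{*}$. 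Set $R^{*}:=\widetilde R^{*}/\Gamma^{*}$ and $\Omega^{*}:=\widetilde\Omega^{*}/\Gamma^{*}\cong\widetilde\Omega/\Gamma=\Omega$. Since each component $K$ of $R\setminus\Omega$ is null-homotopic in $R$, the $\Gamma$-stabilizer of every component of $\pi^{-1}(K)$ is trivial; transporting by $\Phi$, the corresponding complementary component of $\widetilde\Omega^{*}$ — a point or closed round disc by Theorem~\ref{T: HS1} — has trivial $\Gamma^{*}$-stabilizer and hence projects injectively to $R^{*}$ as a point or a closed geometric disc. Thus $\Omega^{*}$ is a circle domain in $R^{*}$. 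Finally $R^{*}$ is compact, because a finitely generated Fuchsian (or Euclidean crystallographic) group isomorphic to a closed surface group is neither free nor contains parabolics, and for such groups this forces cocompactness; so $R^{*}$ is closed of genus $g$.

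For uniqueness, given two such models $(R_{1}^{*},\Omega_{1}^{*})$, $(R_{2}^{*},\Omega_{2}^{*})$ and a conformal homeomorphism $f\colon\Omega_{1}^{*}\to\Omega_{2}^{*}$, lift $f$ to a conformal homeomorphism $\widetilde f$ between the corresponding circle domains $\widetilde\Omega_{1}^{*},\widetilde\Omega_{2}^{*}\subseteq\Chat$. These are two circle-domain models of the same cover, so by the uniqueness clause of Theorem~\ref{T: HS1} they differ by a M\"obius map; composing, $\widetilde f$ is itself a M\"obius map. As $\widetilde f$ conjugates $\Gamma_{1}^{*}$ to $\Gamma_{2}^{*}$, it carries the unique invariant complementary disc or point $B_{\infty,1}^{*}$ to $B_{\infty,2}^{*}$, hence $\widetilde R_{1}^{*}$ to $\widetilde R_{2}^{*}$, and therefore descends to a conformal homeomorphism $R_{1}^{*}\to R_{2}^{*}$ extending $f$.

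The hardest part is the first step: constructing the auxiliary closed surface $R$ of exactly the right genus with a well-controlled complement — in particular ensuring that the ``wild'' ends allowed by the countable-ends hypothesis are absorbed by countably many cellular, null-homotopic complementary components rather than forcing an extra handle or an uncountable complement. A second, more technical point is to extract from the rigidity in Theorem~\ref{T: HS1} that $\Gamma^{*}$ is genuinely discrete with trivial stabilizers on all complementary components, so that the quotient is an honest circle domain and not merely an orbifold-like surface with annular complementary pieces.
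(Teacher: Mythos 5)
First, a point of comparison: the paper does not prove this statement. Theorem~\ref{T: HS2} is imported verbatim from He and Schramm \cite[Theorem 0.2]{HS} and used as a black box, so there is no in-paper proof to measure your argument against. That said, your proposal is essentially the standard deduction of that result from Theorem~\ref{T: HS1}: embed $\Omega$ in a closed surface $R$ of the same genus with a countable, cellular complement (a Bochner-type capping of the planar ends), lift to the universal cover, apply the planar theorem upstairs, invoke the rigidity clause of Theorem~\ref{T: HS1} to make the deck group act by M\"obius maps, and descend. This lift--rigidity--descend scheme is exactly the technique the paper itself uses in \S\ref{S: uniqueness} and \S\ref{S: existence}, so your route is consistent with the methods of the paper even though the paper never carries it out for this statement.

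The one step you flag but leave genuinely thin is proper discontinuity of $\Gamma^{*}$ on all of $\widetilde R^{*}=\Chat\setminus B^{*}_{\infty}$ rather than merely on $\widetilde\Omega^{*}$. Because $\Omega$ may have non-isolated ends, a compact subset of $\widetilde R^{*}$ can meet infinitely many complementary components of $\widetilde\Omega^{*}$, so the argument the paper uses in the analogous spot of \S\ref{S: existence} (``each compact subset intersects only finitely many components'') is not available here. Two clean ways to close the gap: (i) for $g\ge 2$ rule out the case that $B^{*}_{\infty}$ is a point (a discrete group of affine maps is metabelian, hence not a genus-$\ge 2$ surface group), so $\Gamma^{*}$ is a torsion-free discrete subgroup of $\textup{Aut}$ of the round disc $\widetilde R^{*}$ and therefore automatically acts freely and properly discontinuously on all of it; for $g=1$ check directly that $\Gamma^{*}$ is a rank-two lattice of translations of $\mathbb{C}$. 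Or (ii) note that each complementary component $L\ne B^{*}_{\infty}$ lies in a precisely invariant Jordan domain, namely the $\Phi$-image of a lifted curve in $\Omega$ separating the corresponding end. Either observation also delivers the compactness of $R^{*}$, most cleanly via the fact that a non-compact surface has free fundamental group while $\pi_{1}(R)$ does not. With these points made explicit your argument is complete and correct.
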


 Likewise, we can extend Theorem~\ref{T: main} to deal with the case of
 arbitrary finite genus. For the moment we also restrict ourselves to finite
 connectivity; we will relax this condition in \S\ref{S: countable}.
\begin{theorem}\label{T: general}
Let $\Omega$ be a Riemann surface of finite genus and finite
connectivity. Then there is a conformal embedding $\varphi$ of $\Omega$ into a
compact Riemann surface $S$, of the same genus as $\Omega$, so that $S
\setminus \varphi(\Omega)$ is the union of disjoint closed, connected and
simply connected sets $L_1, \dots, L_n$, and for each $i \in \{1, \dots, n\}$, $L_i$ is either a single point or a closed geometric disc with respect to the Riemann surface $\varphi(\Omega) \cup L_i$. Moreover, the pair $(S, \varphi)$ is unique up to conformal homeomorphisms.
\end{theorem}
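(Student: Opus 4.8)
The plan is to prove Theorem~\ref{T: general} by a continuity argument over the moduli of closed surfaces of genus $g$, in the spirit of Koebe's proof of his circle domain theorem and of the argument behind Theorem~\ref{T: main}, with quasiconformal deformations supplying the motion through the relevant moduli spaces. First I would reduce to a normal form: we may assume $\Omega=\widehat\Omega\setminus(K_1\cup\cdots\cup K_n)$, where $\widehat\Omega$ is a closed Riemann surface of genus $g$ and the $K_i$ are pairwise disjoint, each being either a point or a closed Jordan region with real-analytic boundary. (This is the classical representation of a finite-genus, finitely connected Riemann surface as a finite bordered Riemann surface with punctures, and in any case is contained in Theorem~\ref{T: HS2}.) An end of $\Omega$ that is a puncture must correspond to a point component $L_i$ in any solution, since otherwise $\varphi(\Omega)\cup L_i$ would have no puncture there; so, relabelling, we take $K_1,\dots,K_m$ to be the Jordan regions and $K_{m+1},\dots,K_n$ the punctures, so that only $K_1,\dots,K_m$ must be turned into geometric discs, the number $m$ being a conformal invariant of $\Omega$.

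Next I would set up the continuity scheme. Let $\mathcal A$ be the set of configurations $(S;L_1,\dots,L_n)$, taken up to conformal isomorphism, in which $S$ is a closed surface of genus $g$, the $L_i$ are pairwise disjoint, $L_{m+1},\dots,L_n$ are points, and for $1\le i\le m$ the set $L_i$ is a closed geometric disc with respect to the complete constant-curvature conformal metric of $S\setminus\bigcup_{j\ne i}L_j$; let $\widetilde{\mathcal A}$ be the associated marked (Teichm\"{u}ller-type) space, and let $\widetilde{\mathcal B}$ be the Teichm\"{u}ller space of Riemann surfaces of the conformal type of $\Omega$, a finite-dimensional contractible manifold. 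There is a natural forgetful map $\Psi\colon\widetilde{\mathcal A}\to\widetilde{\mathcal B}$ sending a configuration to the marked conformal type of $S\setminus\bigcup_i L_i$, and Theorem~\ref{T: general} is precisely the statement that $\Psi$ is a bijection: surjectivity, applied to the point $[\Omega]$, yields the existence of $(S,\varphi)$, and injectivity yields its uniqueness. One checks that $\widetilde{\mathcal A}$ is a connected manifold of the same dimension as $\widetilde{\mathcal B}$: near a configuration the constraint ``$L_i$ is geometric in $S\setminus\bigcup_{j\ne i}L_j$'' is an open one, since a geometric disc is freely prescribed by a centre and a radius below the injectivity radius, so that local coordinates on $\widetilde{\mathcal A}$ are given by the moduli of $S$ together with the centres and radii of $L_1,\dots,L_m$ and the positions of $L_{m+1},\dots,L_n$, a parameter count equal to $\dim\widetilde{\mathcal B}$; connectedness follows by shrinking and sliding the discs. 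Also $\Psi$ is clearly continuous.

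The heart of the matter, and the step I expect to be the main obstacle, is that $\Psi$ is proper: if $(S^{(k)};(L_i^{(k)}))$ leaves every compact subset of $\widetilde{\mathcal A}$, then $S^{(k)}\setminus\bigcup_i L_i^{(k)}$ must leave every compact subset of $\widetilde{\mathcal B}$. A configuration can only degenerate in one of three ways: (a) the surface $S^{(k)}$ degenerates, i.e.\ a simple closed curve is pinched; (b) some disc radius $r_i^{(k)}$ tends to $0$; or (c) some $r_i^{(k)}$ tends to the injectivity radius of $S^{(k)}\setminus\bigcup_{j\ne i}L_j^{(k)}$ at the centre of $L_i^{(k)}$, or two of the discs collide. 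In (a), a pinched curve that is essential in the complement forces the complement to pinch, while a curve bounding a subsurface containing a single $L_i$ makes the annulus between it and $\partial L_i$ have modulus tending to infinity, so that end becomes a puncture in the limit and we have left the stratum; in (b) the end likewise becomes a puncture. Case (c) is the delicate one: here I would use collar-lemma-type estimates for the constant-curvature metric of $S^{(k)}\setminus\bigcup_{j\ne i}L_j^{(k)}$ to show that the conformal modulus of an annular collar of $\partial L_i^{(k)}$ inside $S^{(k)}\setminus\bigcup_j L_j^{(k)}$ tends to $0$, again forcing the Riemann surface to degenerate. Granting properness, the continuity-method package applies: $\Psi$ is a proper map between equidimensional manifolds with connected, contractible target, and the conformal rigidity of intrinsic circle domains --- a first-order deformation of a configuration that fixes the conformal type of $S\setminus\bigcup_i L_i$ must be trivial, because the holomorphic quadratic differential on the complement to which it is dual is incompatible with the geodesic circles $\partial L_i$ bounding it and so vanishes --- makes $\Psi$ a local homeomorphism; a proper local homeomorphism from a connected manifold onto a contractible one is a homeomorphism, which gives both the existence and the uniqueness claims.

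A more robust route to uniqueness, modelled on He--Schramm \cite{HS} and Schramm \cite{S}, is a direct rigidity argument: given a conformal isomorphism between intrinsic circle domains $\Omega_1^*\subset S_1$ and $\Omega_2^*\subset S_2$, a comparison of ordinary and transboundary moduli of curve families shows that it respects the complementary components and carries geodesic circles to geodesic circles, after which Schwarz reflection across these circles extends it to a conformal isomorphism $S_1\to S_2$, so that $(S,\varphi)$ is unique up to conformal homeomorphism. The portion of the whole argument that I expect to demand genuinely new quasiconformal work, beyond what already appears in the genus-zero Theorem~\ref{T: main}, is the properness estimate in case (c): bounding the conformal modulus of collars inside $\Omega$ in terms of the intrinsic radius of the corresponding complementary disc.
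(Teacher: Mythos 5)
Your proposal takes a genuinely different route from the paper. The paper does not use a continuity method at all: it first applies He--Schramm's Theorem~\ref{T: HS2} to realize $\Omega$ as a circle domain $\Omega^*$ in a compact surface $R$; then, for each non-degenerate complementary component $B$, it passes to the universal cover $\mathcal{U}$ of $\Omega^*\cup B$, applies Theorem~\ref{T: HS1} to the circle domain $\mathcal{U}\setminus\pi^{-1}(B)$ to obtain an equivariant circle domain $V$, and uses Lemma~\ref{L: distortion bound} to build a quasiconformal extension across $B$ matching the boundary correspondence. This produces a Beltrami coefficient supported on the finitely many complementary components, with $\|\mu\|_\infty<1$, and the measurable Riemann mapping theorem delivers $S$ and $\varphi$ in one step --- the point being that the corrections on different components do not interact, so no iteration, exhaustion, or degeneration analysis is needed. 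Uniqueness is likewise obtained by lifting to the universal cover of each $\Omega\cup B_i$, where the extension problem again reduces to the uniqueness clause of Theorem~\ref{T: HS1}, and the resulting M\"obius maps are shown to be equivariant and to descend.

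Measured on its own terms, your continuity scheme has genuine gaps at exactly its two load-bearing points. First, local injectivity of $\Psi$ is equivalent to (local) rigidity of intrinsic circle domains, i.e.\ to the uniqueness half of the theorem, and your one-line justification --- that the dual holomorphic quadratic differential is ``incompatible with the geodesic circles $\partial L_i$'' and so vanishes --- is not an argument: the curves $\partial L_i$ are metric circles of the intrinsic metric of $\varphi(\Omega)\cup L_i$, not geodesics of $S$ or of the complement, and no boundary condition on the differential has been identified that forces it to vanish. Your fallback via Schwarz reflection also does not close: reflecting across $\partial L_i$ extends the map only to a collar inside $L_i$, not to all of $L_i$, so it does not by itself produce the conformal isomorphism $S_1\to S_2$. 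Second, properness in your case (c) (a disc radius approaching the injectivity radius, or two discs colliding) is flagged as the delicate step and then only asserted; one must actually prove that the extremal length of the relevant boundary or separating curve blows up in the Teichm\"uller space of the bordered surface, including the case where the ambient surfaces $S^{(k)}$ stay in a compact set of moduli while two discs collide. Until the rigidity and properness steps are supplied, the continuity package does not yield either existence or uniqueness. If you want to repair the uniqueness step economically, the paper's device --- lift $f$ to the universal covers of $\Omega\cup B_i$ and $\Omega'\cup B_i'$, observe that the lifted map is a conformal homeomorphism between circle domains, and invoke the M\"obius rigidity in Theorem~\ref{T: HS1} together with equivariance --- is the natural replacement.
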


The condition that $\Omega$ and $S$ have the same genus means that $S$ has the
minimal possible genus among all compact Riemann surfaces into which $\Omega$
may be embedded.  We will refer to the domain $\varphi(\Omega)$ as an \emph{intrinsic circle domain} in $S$. Note that Theorem~\ref{T: general} includes Theorem~\ref{T: main} as the special case of genus $0$, so this terminology is consistent; we shall only give a proof of Theorem~\ref{T: general}.

 There are some simple special cases. For example, there is precisely
 one case in which the appropriate geometry of $\varphi(\Omega) \cup L_i$ is
 spherical; this occurs when $\Omega$ is simply connected, in which case the
 statement reduces to the Riemann mapping theorem. The doubly connected genus
 $0$ cases occur when $\Omega$ is conformally equivalent to the punctured
 plane $\mathbb{C}^*$, the punctured disc $\mathbb{D}^*$, or a round
 annulus. Note that in the case of the punctured disc, the appropriate
 geometry for $\varphi(\Omega) \cup L_i$ is Euclidean for one complementary component and hyperbolic for the
 other. Another case in which more than one type of geometry must be
 considered occurs when $\Omega$ is a simply connected domain with two points
 removed; then the resulting canonical domain is either a triply-punctured
 sphere or it is $\mathbb{C}^*\setminus K$, where $K$ is the image under the
 exponential map of a closed disc of radius less than $\pi$. If $\Omega$ is a
 Riemann surface of genus $1$ with one end, then $\Omega^*$ is the complement
 of a point or of a Euclidean disc in $\mathbb{C}/\Lambda$, where $\Lambda$ is a lattice in $\mathbb{C}$. In all other cases the natural geometry of $\varphi(\Omega) \cup L_i$ is hyperbolic for every complementary component $L_i$.

Our definition of intrinsic circle domains was motivated by an observation
about extremal cases in the Gr\"otzsch or P\'olya-Chebotarev problem, which asks for the minimizer of the logarithmic capacity among compact
connected sets containing a given finite set of points in $\mathbb{C}$. This
problem often appears in the course of studying other extremal problems in
geometric function theory. For example, there are recent applications to the
Bloch-Landau constant \cite{CO} and to Smale's mean value conjecture \cite{Cr}. It is a
classical result of Lavrentiev and Goluzin that the extremal continuum $E$ is
unique and is the union of finitely many analytic arcs $A_i$, which are trajectories
of a certain rational quadratic differential (see for example \cite{Gol}). It
is not hard to show that each arc $A_i$ is a geodesic arc in the
hyperbolic metric of the domain $(\Chat \setminus E) \cup A_i$.
A similar condition arises for local minima of the condenser capacity for
domains separating one finite set of points from another.  In \cite{OP} it is
observed that the extremal continuum $E$ enjoys \emph{harmonic symmetry}. This
means that for any subarc $I$ of $E$, the harmonic measure of the
complementary domain $\Chat \setminus E$ with respect to the point $\infty$
assigns equal masses to each side of $I$. Note that both the geodesic arc
condition and the harmonic symmetry condition have the property that they may
be verified by checking each arc $A_i$ separately. To check $A_i$, we need to
know only the conformal class of the pair $(\Chat \setminus E, A_i)$. Our notion of intrinsic circle domains arose by analogy with this property.

We now outline the rest of the paper. Section~\ref{S: geometric lemma} gives a
simple qualitative distortion bound for quasiconformal extensions of conformal
maps between ring domains, which we later use several times. Section~\ref{S: uniqueness} gives
the proof of uniqueness in Theorem~\ref{T: general}, and section~\ref{S:
  existence} gives the existence proof. In section~\ref{S: countable}, we
extend Theorem~\ref{T: general} to include some cases with countably
infinitely many complementary components, subject to a geometric constraint. Section~\ref{S: mixed} deals
with a mixed condition, in which some boundary components are required to be
 circles in the spherical metric while the others are required to be intrinsic
 circles in the sense of Theorem~\ref{T: main}. In section~\ref{S: pictures}
 we discuss the use of circle packings for the numerical approximation of
 intrinsic circle domains, and illustrate with some examples.

\section{A geometric lemma}\label{S: geometric lemma}

\begin{lemma}\label{L: distortion bound}
Suppose $A_1$ and $A_2$ are ring domains in $\mathbb{C}$ whose inner boundaries
are circles $C_1$ and $C_2$ respectively. Suppose there is a conformal
homeomorphism $F: A_1 \to A_2$ under which $C_1$ corresponds to $C_2$. Then
the induced homeomorphism $F: C_1 \to C_2$ has a $K$-quasiconformal extension
between the interior discs of $C_1$ and $C_2$, where the constant $K$ depends
only on the conformal modulus of $A_1$. 
\end{lemma}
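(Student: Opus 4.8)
The plan is to reduce the statement to the classical fact that a quasisymmetric self-homeomorphism of a circle extends to a quasiconformal homeomorphism of the disc it bounds, with dilatation controlled by the quasisymmetry constant.

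First I would normalize. Applying Euclidean similarities to the source and the target (these are M\"obius, hence conformal and $1$-quasiconformal), we may assume $C_1 = C_2 = \partial\mathbb{D}$ and that the interior discs are both $\mathbb{D}$, so that $A_1$ and $A_2$ are ring domains lying in the complement of $\overline{\mathbb{D}}$ with inner boundary $\partial\mathbb{D}$. Set $m := \operatorname{mod}(A_1)$; since $F$ is conformal we also have $\operatorname{mod}(A_2) = m$ (and if $m=\infty$ then $A_j$ is conformally a punctured disc and $F$ is the restriction of a M\"obius transformation, so we may assume $m<\infty$). Uniformizing, write $A_j = h_j(\mathrm{A}(m))$ where $\mathrm{A}(m) = \{1 < |w| < e^{m}\}$ and $h_j \colon \mathrm{A}(m) \to A_j$ is conformal, carrying $\{|w|=1\}$ to $\partial\mathbb{D}$ (the boundary extension being supplied by Carath\'eodory's theorem, since $\partial\mathbb{D}$ is a Jordan curve). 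Then $h_2^{-1}\circ F \circ h_1$ is a conformal automorphism of $\mathrm{A}(m)$ preserving the inner boundary circle, hence a rotation; absorbing it into $h_1$ we may assume $F = h_2 \circ h_1^{-1}$, so on the boundary $f := F|_{\partial\mathbb{D}} = \big(h_2 \circ h_1^{-1}\big)\big|_{\partial\mathbb{D}}$.

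The crux is the claim that each boundary map $h_j|_{\{|w|=1\}} \colon \{|w|=1\} \to \partial\mathbb{D}$ is $H$-quasisymmetric with $H = H(m)$. Because $\partial\mathbb{D}$ and $\{|w|=1\}$ are real-analytic, the Schwarz reflection principle extends $h_j$ to a conformal embedding $\widetilde{h}_j$ of the larger annulus $\{e^{-m} < |w| < e^{m}\}$ (the reflection of $A_j$ across $\partial\mathbb{D}$ via $z\mapsto 1/\bar z$ is again a ring domain, of modulus $2m$, on which $h_j$ extends by reflection). The circle $\{|w|=1\}$ lies at distance at least $r_0(m) := \min(1-e^{-m},\, e^{m}-1) > 0$ from the boundary of this larger annulus, so every point of it is the centre of a disc of radius $r_0(m)$ on which $\widetilde{h}_j$ is univalent; applying the Koebe distortion theorem on these discs and chaining along $\{|w|=1\}$ shows that $\widetilde{h}_j|_{\{|w|=1\}}$, equivalently $h_j|_{\{|w|=1\}}$, is $H(m)$-quasisymmetric. (This is the standard fact that the core curve of an annulus of modulus at least $m$ on each side is a $K(m)$-quasicircle carrying an $H(m)$-quasisymmetric conformal parametrization.)

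Granting the claim, $f = (h_2\circ h_1^{-1})|_{\partial\mathbb{D}}$ is a composition of $H(m)$-quasisymmetric circle homeomorphisms, hence itself $H'(m)$-quasisymmetric, since inverses and compositions of quasisymmetric maps are quasisymmetric with controlled constants. By the Beurling--Ahlfors extension theorem (transported from the half-plane to the disc, or the Douady--Earle extension), $f$ extends to a $K(m)$-quasiconformal homeomorphism of $\overline{\mathbb{D}}$; undoing the normalizing similarities yields the asserted $K$-quasiconformal extension between the interior discs of $C_1$ and $C_2$ with $K$ depending only on $m=\operatorname{mod}(A_1)$. The only genuine work is the quantitative distortion estimate on the core circle in the claim; the rest is assembling standard facts about conformal, quasisymmetric and quasiconformal maps, so I expect that estimate (getting the dependence of $H$ on $m$ right, and noting it necessarily degenerates as $m\to 0$) to be the main point to set down carefully.
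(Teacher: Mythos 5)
Your proposal is correct and follows essentially the same route as the paper: both arguments reflect across the boundary circle by Schwarz reflection, use the Koebe distortion theorem to show the derivative of the boundary parametrization varies by a factor bounded in terms of the modulus, and then extend the resulting controlled circle homeomorphism quasiconformally into the disc. The only (cosmetic) differences are that you uniformize each ring domain to a round annulus and compose, where the paper reflects $F$ itself and phrases the derivative bound via the hyperbolic density of the doubled ring domain, and that you invoke Beurling--Ahlfors/Douady--Earle for a quasisymmetric boundary map where the paper uses radial interpolation of a bi-Lipschitz one.
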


\begin{proof}
Lemma~\ref{L: distortion bound} is a consequence of \cite[Thm 1.4]{HS}, but it can also be proved
simply as follows. After applying similarities, we may assume that $C_1$ and
$C_2$ are both the unit circle $\partial \mathbb{D}$. Let $m$ be the conformal
modulus of $A_1$. Apply Schwarz reflection
across $\partial \mathbb{D}$ to extend $F$ to a
conformal homeomorphism $\tilde{F}$ between ring domains $\tilde{A}_1$ and
$\tilde{A}_2$. If $m = \infty$ then $\tilde{F}$ is a
conformal homeomorphism from the punctured plane to itself, so is in fact a
similarity, and its restriction to $\partial\mathbb{D}$ is an isometry. Otherwise, the doubled ring domains $\tilde{A}_i$ are hyperbolic, $\tilde{F}$ is a
hyperbolic isometry, and the hyperbolic length of $\partial \mathbb{D}$ in
each domain depends only on $m$. We claim that the density of
the hyperbolic metric of $\tilde{A}_1$ on $\partial \mathbb{D}$ is bounded
above and below in terms of $m$. Indeed, the hyperbolic length
of $\partial \mathbb{D}$ is a function of $m$ so we get such a
bound by applying the Koebe distortion theorem to a single-valued lift of
$\log \tilde{F}$ to the universal cover of $\tilde{A}_1$.

It follows that $F: \partial\mathbb{D} \to \partial\mathbb{D}$ is bi-Lipschitz , with constants that depend only on $m$. Therefore the
radial interpolation of the boundary correspondence provides a quasiconformal
extension with dilatation bounded in terms of the modulus of $A_1$. 
\end{proof}

\section{Uniqueness}\label{S: uniqueness}

Suppose that $\Omega$ and $\Omega'$ are two intrinsic circle domains in closed Riemann surfaces $S$ and $S'$ respectively. Let $f: \Omega \to \Omega'$ be a conformal homeomorphism. We have to show that $f$ is the restriction of a conformal isomorphism $\tilde{f}: S \to S'$. 

First, suppose that $\Omega$ is simply connected or doubly connected. If
$\Omega$ is simply connected or is doubly connected with infinite conformal modulus, then $S$
is the Riemann sphere and $\Omega$ is a circle domain. $\Omega'$ has the same
connectivity as $\Omega$ since they are homeomorphic, so $\Omega'$ is also a
circle domain in the Riemann sphere $S$. The uniqueness part of Koebe's
theorem then shows that $f$ is the restriction of a M\"{o}bius map.  Otherwise, $\Omega$ is a ring domain of finite modulus, as is $\Omega'$. Then
$S$ and $S'$ are both of genus $0$ and we have to show that $f$ extends to a
M\"{o}bius map. Consider a complementary component $B_1$ of $\Omega$, and let
$B_1'$ be the corresponding complementary component of $\Omega'$. Let $\pi:
\mathbb{D} \to \Omega \cup B_1$ and $\pi': \mathbb{D} \to \Omega' \cup B_i'$
be any Riemann maps. Then by hypothesis $\pi^{-1}(B_1)$ and $\pi'^{-1}(B_1')$
are closed hyperbolic discs in $\mathbb{D}$, which are bounded by Euclidean
circles. Thus $\pi^{-1}(\Omega)$ and $\pi'^{-1}(\Omega')$ are round
annuli. Any conformal homeomorphism between round annuli is the restriction
of a M\"{o}bius map, so we can extend $\pi'^{-1} \circ f \circ \pi$ to a
M\"{o}bius map $g$. Now define $f_1 = \pi' \circ g \circ \pi^{-1}$. Then
$f_1: \Omega \cup B_1 \to \Omega' \cup B_1'$  is a conformal homeomorphism
extending $f$. Similarly, $f$ extends to a conformal homeomorphism $f_2:
\Omega \cup B_2 \to \Omega \cup B_2'$. Gluing $f_1$ and $f_2$ together we
obtain a conformal homeomorphism from the Riemann sphere to itself which
extends $f$. This must be a M\"{o}bius map. Since $\Omega$ is conformally
equivalent to some round annulus, which is an intrinsic circle domain, this
in fact shows that $\Omega$ is a round annulus. 

Now suppose that $\Omega$ is at least triply connected. Consider any complementary component $B_i$ of $\Omega$, and let $B_i'$ be the
corresponding complementary component of $\Omega'$. This makes sense since
the homeomorphism $f$ induces a bijection between the ends of $\Omega$ and the
ends of $\Omega'$; for an intrinsic circle domain each end corresponds to
precisely one complementary component since the complementary components are all contractible.

Since $\Omega$ and $\Omega'$ are at least triply connected, $\Omega \cup B_i$
and $\Omega' \cup B_i'$ are not simply connected. Therefore we must consider
their universal covers, $\mathcal{U}$ and $\mathcal{U}'$ respectively, in order to
understand their hyperbolic metrics.  Let $\pi:
\mathcal{U} \to \Omega \cup B_i$  and $\pi': \mathcal{U}' \to \Omega' \cup
B_i'$ be unbranched analytic covering maps, with deck
transformation groups $\Gamma_i$ and $\Gamma_i'$
respectively. Since $B_i$ and $B_i'$ are contractible subsets of $\Omega \cup
B_i$ and $\Omega' \cup B_i'$ respectively, the homeomorphism $f$ induces a
homotopy equivalence $\Omega \cup B_i \to \Omega' \cup B_i'$, which in turn induces an
isomorphism $\rho_i: \Gamma_i \to \Gamma_i'$. Then $f$ lifts to a conformal
homeomorphism \[\hat{f}_i:\mathcal{U} \setminus \pi^{-1}\left(B_i\right) \to
\mathcal{U}' \setminus \pi'^{-1}\left(B_i'\right)\,.\] Note that $\hat{f}_i$ is $(\Gamma_i,
\Gamma_i')$-equivariant: for any element $\gamma \in \Gamma_i$ we have
\[ \rho_i(\gamma)\circ \hat{f}_i = \hat{f}_i \circ \gamma\,.\]
 The connected components of $\pi^{-1}\left(B_i\right)$
and of $\pi'^{-1}\left(B_i'\right)$ are disjoint closed discs because $\Omega$
and $\Omega'$ are intrinsic circle domains. Therefore $\hat{f}_i$ is a
conformal homeomorphism between circle domains. Hence by the uniqueness part of
Theorem~\ref{T: HS2} it is the restriction of a M\"{o}bius map $M_i$.  We find
that $M_i$ takes $\mathcal{U}$ onto $\mathcal{U}'$ since each of the two
circle domains has only one non-isolated boundary component. In particular,
$\mathcal{U} = \mathcal{U}'$. Moreover, the map $M_i: \mathcal{U} \to
\mathcal{U'}$ is also $(\Gamma_i, \Gamma_i')$-equivariant, since $\rho_i(\gamma)
\circ M_i \circ \gamma^{-1}: \mathcal{U} \to \mathcal{U'}$ is a M\"{o}bius map
extending $\hat{f}_i$ and is therefore equal to $M_i$. It follows that $M_i$
descends to a conformal homeomorphism $\tilde{f}_i : \Omega \cup B_i \to \Omega' \cup
B_i'$ that extends $f$. 

Gluing the extensions $\tilde{f}_i$ together for $i=1, \dots, n$, we obtain the desired
conformal homeomorphism $\tilde{f}: S \to S'$ extending $f$. This completes the
proof of uniqueness.

\begin{figure}[t]
	\begin{center}
		\includegraphics[scale=0.5]{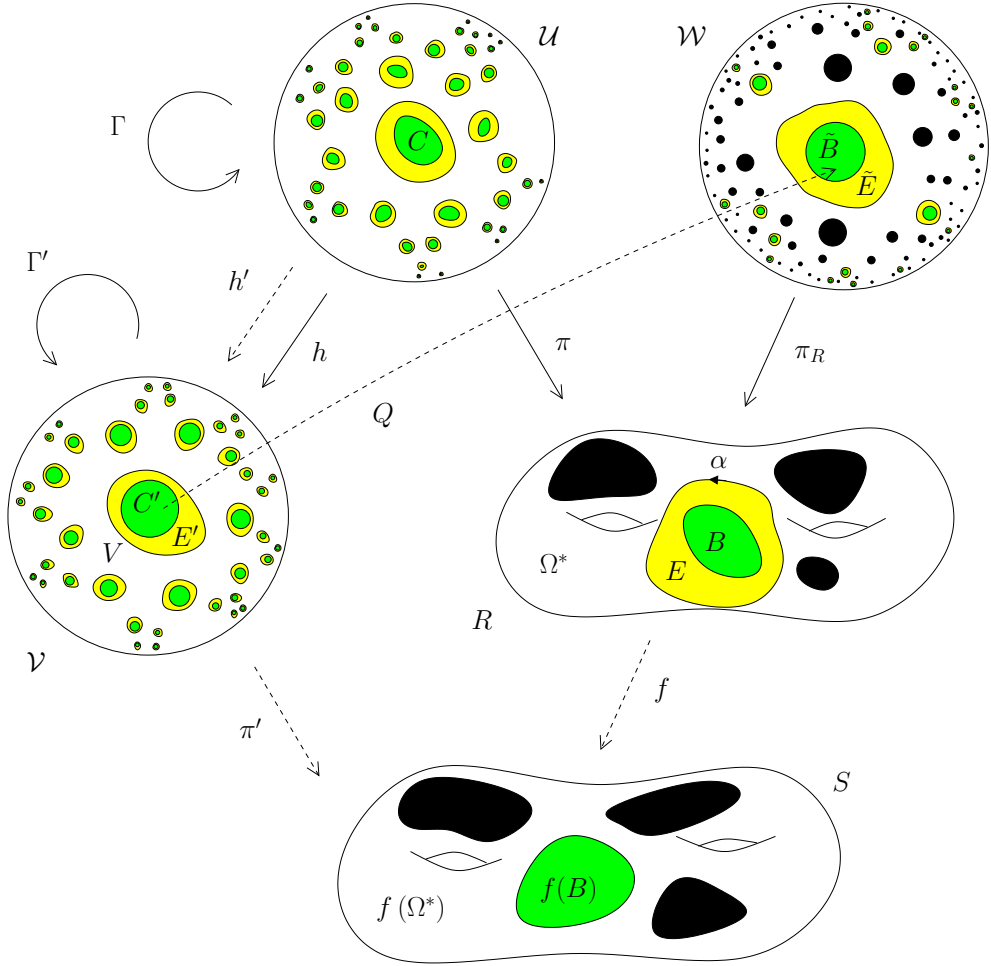}
	\end{center}
\caption{Domains, maps and group actions in the existence proof, illustrating the case where $R$ is hyperbolic.}
	\label{F: existence proof}
\end{figure}

\section{Existence}\label{S: existence}

The first step is to apply Theorem~\ref{T: HS2} to map $\Omega$ via a conformal homeomorphism onto a circle domain $\Omega^*$ in some compact Riemann surface $R$. 

Our goal is to construct a quasiconformal homeomorphism $f$ of $R$ onto
another Riemann surface $S$ so that $f$ is conformal on $\Omega^*$ and the
image $f(\Omega^*)$ is an intrinsic circle domain in $S$. In fact we will
construct a Beltrami coefficient $\mu$ on $R$ with $\|\mu\|_{\infty} < 1$;
then by the measurable Riemann mapping theorem we will obtain a Riemann surface
$S$ and a quasiconformal homeomorphism $f: R \to S$ such that $\mu(z) f_{z}=
f_{\overline{z}}$ a.e. on $R$. By construction, $\mu$ will be identically zero on
$\Omega^*$, so that $f$ is conformal there.

 Consider any connected component $B$ of $R \setminus \Omega^*$ that is not a
 single point. Our aim is to construct $\mu$ on $R$ so that $f(B)$ will be a
 closed geometric disc with respect to the Riemann surface $f(\Omega^* \cup
 B)$. Since this is a condition on the conformal structure of $f(\Omega^* \cup
 B)$, it depends only on the restriction of $\mu$ to $\Omega^* \cup B$, which
 by construction will be non-zero only on $B$. This is a key point, for it
 means that we can correct the conformal structure on each complementary
 component separately and the corrections will not interfere with each
 other. This is in contrast to the proof of Koebe's theorem by iterated
 Riemann mapping, where each complementary component has to be corrected
 infinitely many times and the required mapping is obtained in the limit. By
 hypothesis $\Omega^*$ has only finitely many complementary components in $R$,
 so to ensure that $\|\mu\|_{\infty} < 1$ it will suffice that $\|\mu\|_{B} <
 1$ for each complementary component $B$. 

 Let $\pi: \mathcal{U} \to \Omega^* \cup B$ be an unbranched analytic covering
 map, where $\mathcal{U}$ is one of $\mathbb{D}$, $\mathbb{C}$, or $\Chat$. We rule out the case $\mathcal{U} = \Chat$, since in this case
 $\Omega^*$ is an open disc in $\Chat$, so it is already an intrinsic circle
 domain. Let $\Gamma$ be the deck transformation group of $\pi$, i.e. the
 (infinite) group of conformal automorphisms $\gamma: \mathcal{U} \to
 \mathcal{U}$ such that $\pi \circ \gamma = \pi$. The preimage $\pi^{-1}(B)$
 has infinitely many connected components, each of which is a topological
 closed disc, bounded by an analytic Jordan curve. 

Apply Theorem~\ref{T: HS1} to the domain $\pi^{-1}\left(\Omega^*\right) = \mathcal{U}
\setminus \pi^{-1}(B)$. This provides a conformal mapping $h:
\pi^{-1}(\Omega^*) \to V \subset \Chat$, where $V$ is a circle domain. Each component of $\pi^{-1}(B)$ corresponds under $h$ to a component of
$\Chat \setminus V$ that is a closed disc, not a singleton, since it is isolated and cannot be
separated from the other components by a ring domain of arbitrarily large
modulus contained in $\pi^{-1}\left(\Omega^*\right)$. The remaining end of $\pi^{-1}\left(\Omega^*\right)$ may correspond to either a
point or a disc in the complement of $V$; this component is distinguished
because it is not isolated. Let $\mathcal{V}$ be the union of $V$ and all of
its isolated complementary components.

For any non-trivial element $\gamma \in \Gamma$, the map $h \circ \gamma$ is
also a conformal mapping of $\pi^{-1}\left(\Omega^*\right)$ onto the circle domain $V$, so by the uniqueness part
of Theorem~\ref{T: HS1} we have $h \circ \gamma = \gamma' \circ h$ for a
unique M\"{o}bius map $\gamma'$, which restricts to an automorphism of $V$
with no fixed points since $\gamma$ has no fixed points in
$\mathcal{U}$. The map that sends $\gamma$ to $\gamma'$ is therefore an
injective homomorphism $\Gamma \to \Gamma'$, where $\Gamma'$ is a subgroup of
$\textup{Aut}(\mathcal{V})$. Since $\Gamma'$ acts simply transitively on the
components of $h(\pi^{-1}(B))$ and has no fixed points in $V$,  it acts freely on $\mathcal{V}$. Since $\textup{Aut}(V)$ acts properly discontinuously on $V$, and each compact subset of $\mathcal{V}$ intersects only finitely many components of $h(\pi^{-1}(B))$, we also conclude that $\Gamma'$ acts properly discontinuously on $\mathcal{V}$.

 Let $C$ be a  connected component of $\mathcal{U} \setminus
 \pi^{-1}\left(\Omega^*\right)$, and let $C'$ be the corresponding component of
 $\mathcal{V} \setminus V$, so that $C'$ is bounded by a circle $\partial
 C'$.   Let $\pi_R: \mathcal{W} \to R$ be an analytic universal covering of
 $R$, so that $\pi_R^{-1}\left(\Omega^*\right)$ is a circle domain. Let
 $\tilde{B}$ be a connected component of $\pi_R^{-1}(B)$, bounded by a circle
 $\partial \tilde{B}$.

 We will examine the behavior of the maps $h, \pi$ and $\pi_R$ in the
 neighborhood of the curves $\partial C$ and $\partial \tilde{B}$. For the
 present proof the aim is merely to show that $h$ may be extended continuously to a quasiconformal homeomorphism $h': \mathcal{U} \to \mathcal{V}$ that is
 equivariant with respect to $\Gamma$ and $\Gamma'$. In fact we will work
 slightly harder, using Lemma~\ref{L: distortion bound} to show that the quasiconformal dilatation of this extension
 can be made to depend only on a conformal invariant of $\Omega$. This will be
 useful in the next section.

 Let $\alpha$ be the simple closed geodesic in the hyperbolic metric of $\Omega^*$ that separates
 $B$ from all the other components of $R \setminus \Omega^*$. Let $E$ be the
 ring domain bounded between $\alpha$ and $B$. It has modulus $\textup{mod}(E)
 = \pi^2/2\ell(\alpha)$, where $\ell(\alpha)$ is the length of $\alpha$ in the
 hyperbolic metric of $\Omega^*$; note that this length is a conformal
 invariant of $\Omega$.

 The restriction of $\pi_R$
to this circle domain is an unbranched analytic covering of $\Omega^*$ and is therefore
an isometry from the hyperbolic metric of $\pi_R^{-1}\left(\Omega^*\right)$ to
the hyperbolic metric of $\Omega^*$. Thus the connected components of $\pi_R^{-1}(\alpha)$ are simple closed geodesics in
 the hyperbolic metric of $\pi_R^{-1}(\Omega^*)$ and are therefore disjoint,
 each separating a connected component of $\pi_R^{-1}(B)$ from all of the
 other components of $\mathcal{W} \setminus
 \pi_R^{-1}\left(\Omega^*\right)$. It follows that $\pi_R$ maps each connected
 component of $\pi_R^{-1}(E)$ bijectively onto $E$. One of these is a ring
 domain $\tilde{E}$ surrounding $\tilde{B}$.

Likewise, the restriction of $\pi \circ h^{-1}$ to $V$ is an unbranched
analytic covering of $\Omega^*$, so $\pi \circ h^{-1}$ maps each connected
component of $h \circ \pi^{-1}(E)$ bijectively onto $E$. One of these connected components is a ring domain $E'$ surrounding $C'$.

 Consider the branch of $\pi_R^{-1} \circ \pi \circ h^{-1}$ that maps $E'$ to $\tilde{E}$.  It is a homeomorphism of ring domains taking the inner boundary
 circle $\partial C'$ onto the inner boundary circle $\partial \tilde{B}$.
 Lemma~\ref{L: distortion bound} gives an extension to a quasiconformal
 homeomorphism $Q$ from $E' \cup C'$ to $\tilde{E} \cup \tilde{B}$. 

Now $Q^{-1} \circ \pi_R^{-1}$ gives us a quasiconformal homeomorphism  $g: B \to C'$
that continuously extends the boundary correspondence induced by of $h \circ \pi^{-1}$. We define $\mu|_B$ to be the Beltrami coefficient $g_{\overline{z}}/g_{z}$.

When we solve the Beltrami equation to obtain $f: R \to S$, the covering map
$\pi': \mathcal{V} \to \left(S \setminus f\left(\Omega^*\right) \right) \cup
f(B)$ such that $f \circ \pi = \pi' \circ h$ will be an unbranched analytic
covering, mapping the disc $C'$ onto $f(B)$, as required.

\section{The case of  countably infinite connectivity}\label{S: countable}

It is natural to ask whether the notion of intrinsic circle domains can be
extended to domains of countably infinite connectivity.

\subsection{All ends isolated}

First, we deal with the case of a domain $\Omega$ such that every end of $R$ is
\emph{isolated}, i.e. has a neighborhood that meets no other end. If $\Omega$
is embedded in a compact ambient Riemann surface then this implies that
$\Omega$ has only finitely many ends and finite genus, and we already
understand this case. 

For a more interesting example, let $R$ be a $\mathbb{Z}$-cover of a compact
genus $2$ Riemann surface, so that $R$ has infinite genus and two ends. Then
let $\Omega$ be a domain obtained by removing countably many disjoint
closed topological discs with no accumulation point from $R$. We could hope to
modify the structure of $R$ on a neighborhood of each disc in order to make
the complementary components be intrinsic discs. However, there is no way to fix
the two ends of $R$ so that they are represented by intrinsic discs, for the
resulting ambient Riemann surface would be compact and therefore have only
finite genus.

Suppose that $\Omega$ is infinitely connected or has infinite genus.
We call an end of $\Omega$ \emph{fixable} if it has a neighborhood that is a
ring domain.

\begin{lemma}
 For any Riemann surface $\Omega$, there is a conformal embedding of $\Omega$
 into another Riemann surface $R$ such that every connected component of
 $R \setminus \Omega$ is an intrinsic disc and no end of $R$ is fixable.
\end{lemma}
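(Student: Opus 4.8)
The plan is to cap, in one go, exactly those ends of $\Omega$ that are responsible for fixability, leaving the others alone. First, if $\Omega$ has finite genus and finite connectivity the result is immediate: Theorem~\ref{T: general} realizes $\Omega$ as an intrinsic circle domain in a \emph{compact} surface $S$, and a compact surface has no ends, so we may take $R=S$. So assume $\Omega$ has infinitely many ends or infinite genus. Call an end \emph{essential} if it lies in the largest set $\mathcal{N}$ of ends such that every neighbourhood of every member of $\mathcal{N}$ has positive genus or contains another end of $\mathcal{N}$; equivalently $\mathcal{N}=\bigcap_\alpha\mathcal{N}_\alpha$, with $\mathcal{N}_0$ the set of all ends and $\mathcal{N}_{\alpha+1}$ the ends all of whose planar neighbourhoods contain an end of $\mathcal{N}_\alpha$, a decreasing transfinite sequence that stabilizes because $\Omega$ has only countably many ends. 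We cap the non-essential ends; enumerate them $e_1,e_2,\dots$ and choose pairwise disjoint collar neighbourhoods $A_k$ (ring domains, or punctured discs for puncture-type ends) by a routine shrinking argument.

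The heart of the matter is capping a single non-essential end $e$. If $e$ already has a punctured-disc neighbourhood in $\Omega$, I would simply fill it with a point; a point needs no further checking. Otherwise I would run the existence argument of Section~\ref{S: existence} abstractly, with no quasiconformal surgery at all — this is essential, since in infinite connectivity the surgeries of Section~\ref{S: existence} carry no uniform dilatation bound and need not assemble into a global Beltrami coefficient of norm $<1$. Cap $e$ by a topological closed disc $B$. If $\Omega\cup B$ is not hyperbolic then it is $\Chat$, $\mathbb{C}$, $\mathbb{C}^{*}$ or a torus, so $\Omega$ has finite genus and connectivity and we are back in the case already handled. If $\Omega\cup B$ is hyperbolic, take the universal cover $\pi\colon\mathbb{D}\to\Omega\cup B$ with deck group $\Gamma$, apply Theorem~\ref{T: HS1} to $\mathbb{D}\setminus\pi^{-1}(B)$ to obtain a conformal map $h$ onto a circle domain $V$, note (uniqueness in Theorem~\ref{T: HS1}, exactly as in Sections~\ref{S: uniqueness} and \ref{S: existence}) that $h$ conjugates $\Gamma$ to a group $\Gamma'$ of M\"obius transformations, let $\mathcal{V}$ be $V$ together with its isolated complementary components, check as in Section~\ref{S: existence} that $\Gamma'$ acts freely and properly discontinuously on the simply connected domain $\mathcal{V}$, and set $\Omega_e:=\mathcal{V}/\Gamma'$. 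Then $h$ descends to a conformal embedding $\Omega\hookrightarrow\Omega_e$ whose complement is a single closed geometric disc, and $\Omega_e$ depends only on $\Omega$ and $e$, so the cappings do not interfere with one another.

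Finally I would build $R$ by gluing to $\Omega$, along each collar $A_k$, the cap $\Omega_{e_k}\setminus\Omega$ with the identification supplied by $\Omega_{e_k}$; disjointness of the $A_k$ makes this a well-defined Riemann surface into which $\Omega$ conformally embeds, and deleting all caps but the $k$-th recovers $\Omega_{e_k}$, so every complementary component of $\Omega$ in $R$ is an intrinsic disc (or a point) in the required sense. That $R$ has no fixable end is precisely the defining property of the essential ends: a ring-domain neighbourhood in $R$ of an uncapped (hence essential) end $e'$, intersected with $\Omega$, would be a planar neighbourhood of $e'$ all of whose other ends were capped, i.e.\ are non-essential — contradicting essentiality of $e'$. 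The main obstacle, as in Section~\ref{S: existence}, is the equivariant application of Theorem~\ref{T: HS1}: verifying that $\Gamma'$ really consists of M\"obius maps acting properly discontinuously on $\mathcal{V}$ and that $\mathcal{V}/\Gamma'$ genuinely is ``$\Omega$ with $e$ capped by a geometric disc''. A secondary obstacle is point-set bookkeeping: checking that the infinite gluing yields a Riemann surface even where caps accumulate (they can only accumulate at essential ends, or at point-caps, where manifold structure still persists), and that for each disc cap $L$ the surface $\Omega\cup L$ — which is $R$ with the other caps removed — is genuinely a Riemann surface.
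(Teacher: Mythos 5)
Your local capping of a single fixable end is exactly the paper's: glue a disc along an annular collar, pass to the universal cover of the resulting surface, apply Theorem~\ref{T: HS1} equivariantly, and observe that the quotient of $\mathcal{V}$ by $\Gamma'$ is $\Omega$ with that end capped by a geometric disc; the paper likewise performs no quasiconformal surgery here, and likewise assembles $R$ by gluing all the singly-capped surfaces $R_E$ along their common copy of $\Omega$. Where you part company with the paper is in \emph{which} ends you cap: the paper caps precisely the fixable ends and leaves the clause ``no end of $R$ is fixable'' without further comment, whereas you enlarge the capped class to the non-essential ends of a Cantor--Bendixson-type hierarchy precisely in order to prove that clause. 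You have put your finger on the genuinely delicate point, namely that capping fixable ends can create new fixable ends.

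However, your construction breaks at its first step. You propose to choose pairwise disjoint ring-domain (or punctured-disc) collars $A_k$ in $\Omega$ around \emph{all} non-essential ends and cap them in one round. A non-essential end need not be fixable, i.e.\ need not possess any annular neighbourhood in $\Omega$, so no such collar exists. Concretely, let $\Omega$ be $\mathbb{D}$ minus a sequence of disjoint closed discs accumulating at a single boundary point: each hole-end has an annular neighbourhood containing no other end, so it leaves the hierarchy at $\mathcal{N}_1$; consequently the outer end leaves at $\mathcal{N}_2$ and is non-essential; yet every neighbourhood of the outer end contains infinitely many hole-ends, so it has no ring-domain collar and cannot be capped by gluing a disc to $\Omega$. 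Iterating the capping along your $\mathcal{N}_\alpha$ hierarchy does not repair this within the stated framework: a cap $L$ produced at a later stage is a geometric disc relative to the partially capped surface, not relative to $\Omega \cup L$, and in the example the hole-caps accumulate on any cap of the outer end, so $\Omega\cup L$ is not even open in $R$ and the outer end can only be represented by a non-isolated singleton in the sense of the definition in \S\ref{S: countable}, never by a disc. (A smaller issue: stabilization of the $\mathcal{N}_\alpha$ cannot be justified by countability of the end space, which may be a Cantor set for a general Riemann surface, though it does stabilize for Cantor--Bendixson reasons.) So the one-round disjoint-collar architecture must be abandoned: the ends that can be capped by intrinsic discs are exactly the fixable ones, as in the paper, and the ``no fixable end of $R$'' clause requires a separate argument --- or an allowance for non-isolated singleton caps --- which your proposal does not supply.
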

\begin{proof}
 We can find disjoint open neighborhoods of all the fixable ends,
for example by cutting along closed geodesics or horocycles around each end. 

  In the existence proof for Theorem~\ref{T: general}
we only used a local surgery to modify the ambient
Riemann surface in a neighborhood of each complementary component. The
initial step of passing to a circle domain was technically convenient (and
will be needed later), but was not really necessary. All we really needed to know was that for each individual
end $E$, we can embed $\Omega$ in a Riemann surface $R_E$ such that $R_E
\setminus \Omega$ is a geometric disc in $R_E$. This is true for any fixable
end $E$, since it has a neighborhood that is a ring domain and therefore it
has a neighborhood $U_E$
conformally equivalent to a round annulus $A = \{ z \in \mathbb{C}\,|\, a <
|z| < 1\}$, for some $0 \le a < 1$. Then we can glue the open unit disc to
$\Omega$, identifying $A$ with $U_E$, to obtain a Riemann surface $R_E'$ in
which $\Omega$ is embedded so that the end $E$ corresponds to a connected
component $K$ of $R_E' \setminus \Omega$. Then we can apply Theorem~\ref{T: HS1}
to the preimage of $\Omega$ in the universal cover of $R_E'$, to find out how
to modify the conformal structure on $K$ to obtain an
embedding of $\Omega$ in a new Riemann surface $R_E$ so that the end $E$
corresponds to a connected component of $R_E \setminus \Omega$ that is an
intrinsic disc. 

 To construct $R$ we glue together all the Riemann surfaces $R_E$
 corresponding to fixable ends $E$ by identifying the embedded copies of
 $\Omega$. The resulting $R$ is locally a Riemann surface, is connected, and is second countable because $\Omega$ can only have countably many fixable ends. 
\end{proof}

\subsection{Non-isolated ends}

 We now return to the case of a subdomain $\Omega$ of a compact Riemann
 surface, but assume that $\Omega$ is countably infinitely connected. It must now be the
 case that some complementary components are not isolated.  There is a potential topological obstruction associated with complementary
 components that are not isolated: they cannot be represented by intrinsic
 discs of finite radius. We might attempt to salvage something by allowing complementary
components to be horodiscs with respect to the hyperbolic metric, but this
would not help in the case of a circle domain $\Omega \subset
\Chat$ in which some circular complementary component $B$ has precisely two
points on its boundary that are accumulation points of other complementary
circles. 

Therefore in the countably infinitely connected case, we define an intrinsic
circle domain $\Omega$ to be a subdomain of a compact Riemann surface $R$ such
that any complementary component that is not isolated is a singleton and
every non-singleton complementary component $L$ is a closed geometric disc
with respect to $\Omega \cup L$. This definition is intended to be analogous
to the definition of a circle domain in the countably connected case.

In order to obtain a positive theorem, we can place a simple
conformal geometric constraint on the domain.  We will say that a countably
connected domain $\Omega$ in a compact Riemann surface $R$ is \emph{uniformly
  separated} if there exists $\epsilon > 0$ such that each connected component of $R \setminus \Omega$ either is a single point or is separated from
  all the remaining complementary components by a ring domain of modulus at
  least $\epsilon$ embedded in $R \setminus \Omega$.

\begin{lemma}\label{L: no cavitation}
 Suppose $\Omega$ is a countably-connected uniformly separated domain in a
 compact Riemann surface $R$. Let $\varphi$ be any conformal embedding of
 $\Omega$ into a compact Riemann surface $R'$ of the same genus as $R$. Then the
 non-singleton complementary components of $R'$ correspond to the
 non-singleton complementary components of $R$, so  $\varphi(\Omega)$
 is uniformly separated in $R'$.
\end{lemma}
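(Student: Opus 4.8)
The plan is to work end by end. The conformal embedding $\varphi$ induces a bijection between the ends of $\Omega$, the complementary components of $\Omega$ in $R$, and the complementary components of $\varphi(\Omega)$ in $R'$; write $K_e$ for the component in $R$ and $L_e$ for the component in $R'$ attached to an end $e$. It suffices to show that $K_e$ is a single point if and only if $L_e$ is, for then the separating ring domains witnessing uniform separation in $R$ (which lie in $\Omega$ and have conformally invariant moduli) push forward to ring domains in $\varphi(\Omega)$ separating the non-singleton $L_e$'s from the rest with the same modulus bound $\epsilon$. Two remarks will be used throughout: first, under uniform separation every \emph{non}-isolated complementary component is forced to be a singleton, because the ring domain of modulus $\ge\epsilon$ around a non-singleton component isolates it; second, ``having a punctured-disc neighbourhood'' and ``having an annular neighbourhood of a prescribed finite modulus that meets no other end'' are conformal invariants of the germ of $\Omega$ at $e$, hence transfer from $R$ to $R'$.

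No collapse --- a non-singleton $K_e$ forces $L_e$ non-singleton --- is the easy direction. By uniform separation $K_e$ is isolated and carries an annular collar of modulus $\ge\epsilon$; the supremal such collar has modulus in $[\epsilon,\infty)$, finite because an infinite-modulus collar is conformally $\mathbb{D}^{*}$, which would force either $K_e$ or an interior Jordan curve to be a puncture. This finite collar modulus is a conformal invariant, so in $R'$ the end $e$ still has a finite-modulus annular collar; since a single point has punctured-disc neighbourhoods, all of infinite modulus, $L_e$ cannot be a point. No cavitation for an \emph{isolated} point $K_e=\{p\}$ is equally easy: then $e$ has a $\mathbb{D}^{*}$-neighbourhood, and a puncture of an open Riemann surface is filled by a single point in every compact ambient surface (bounded holomorphic maps have removable singularities), so $L_e$ is a point.

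The substantive case is a \emph{non}-isolated point $K_e=\{p\}$; here I would argue by contradiction, supposing $L_e$ non-singleton. Then $L_e$ is non-isolated in $R'$ as well --- otherwise $e$ would carry an isolating finite-modulus collar, a conformal invariant, making $K_e$ isolated and hence (being a point) a puncture, contrary to the previous case. Now invoke conformal invariance of the harmonic measure of an end: the harmonic measure of $e$, namely the supremum of bounded harmonic functions on $\Omega$ that vanish at every complementary component other than the one carrying $e$, depends only on the conformal type of the pair $(\Omega,e)$. In $R$ it equals the harmonic measure of the polar singleton $\{p\}$, which is $0$; in $R'$, since neighbourhoods of $e$ shrink onto $L_e$, it equals the harmonic measure of $L_e$ in $\varphi(\Omega)$. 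But $L_e$ is a non-degenerate continuum, hence of positive logarithmic capacity, and the complementary components clustering around $L_e$ are, by the cases already settled, either polar points or --- using uniform separation of $\Omega$ and conformal invariance of the separating moduli --- continua each carrying a mod-$\ge\epsilon$ buffer; such a family cannot screen $L_e$ from the rest of $\varphi(\Omega)$. Hence the harmonic measure of $L_e$ is positive, a contradiction, so $L_e$ is a point.

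The main obstacle is the final screening estimate: showing that a non-degenerate complementary component surrounded only by polar points and by uniformly-$\epsilon$-separated continua has positive harmonic measure. This is exactly where the \emph{uniform} lower bound $\epsilon$ on separating moduli (rather than mere positivity at each component) does essential work --- it prevents a tower of barrier layers from accumulating onto $L_e$ --- and I expect the cleanest route is a transboundary extremal length estimate in the spirit of Schramm's proof of the He--Schramm theorem. A secondary point is bookkeeping: the argument for a non-isolated point presupposes the conclusion for the components clustering at it, so it should be organised as a transfinite induction on the Cantor--Bendixson rank of the non-isolated components, countable connectivity guaranteeing that this rank is a countable ordinal whose base case consists only of isolated components.
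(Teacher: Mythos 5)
Your reduction to the claim that $K_e$ is a singleton if and only if $L_e$ is, and your treatment of the two easy cases, are essentially sound (one small repair: when $K_e$ is a nondegenerate continuum you should conclude that $L_e$ is not a singleton by noting that isolation of the end is a conformal invariant and that an \emph{isolated} singleton $L_e$ would give the end annular neighbourhoods of arbitrarily large modulus; a non-isolated singleton has no punctured-disc neighbourhood inside $\varphi(\Omega)$, so your sentence ``a single point has punctured-disc neighbourhoods, all of infinite modulus'' does not apply to it directly). The genuine gap is exactly where you flag it: the non-isolated singleton case, which is the entire content of ``no cavitation,'' is reduced to an unproved ``screening estimate'' asserting that a nondegenerate $L_e$ would have positive harmonic measure. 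That assertion is not routine --- a nondegenerate continuum on the boundary of a domain can perfectly well carry zero harmonic measure when it is shielded by other boundary components --- so positivity must be extracted from the quantitative separation hypothesis, and doing so is at least as hard as the lemma itself. There is also a definitional problem upstream: for a non-isolated component, every end-neighbourhood of $e$ in $\varphi(\Omega)$ accumulates on infinitely many other complementary components, so ``the harmonic measure of the end $e$'' does not obviously localize to the harmonic measure of $L_e$ alone; and the proposed transfinite induction on Cantor--Bendixson rank is scaffolding that never actually feeds into the harmonic-measure argument as written. As it stands the proposal establishes the two easy implications and leaves the crucial one open.

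The paper's proof takes a different and self-contained route worth comparing. It replaces each separating annulus $A_i$ by the intrinsically defined collar $E_i$ between $B_i$ and the closed geodesic $g_i$ homotopic to the core of $A_i$ (these collars have modulus at least $\epsilon/4$ and pairwise disjoint closures), uses Lemma~\ref{L: distortion bound} to extend $\varphi$ across each non-singleton component by a $K(\epsilon)$-quasiconformal homeomorphism agreeing with $\varphi$ on $g_i$, and glues these extensions to $\varphi$ to obtain a single $K$-quasiconformal homeomorphism of $\Omega\cup\bigcup B_i$ onto $\varphi(\Omega)\cup\bigcup B_i'$. Since the remaining complement is a countable set of points, hence removable for quasiconformal homeomorphisms, this map extends to a homeomorphism $R\to R'$, and both directions of the correspondence fall out at once: the singletons are carried to singletons by the extended homeomorphism. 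The uniformity of $\epsilon$ enters only through the uniform bound on the dilatation, which is precisely where the difficulty you call ``screening'' is absorbed.
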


To explain the significance of this, we first note that a complementary component $B$
is isolated if and only if $\Omega$ contains a ring domain with $B$ as one of
its complementary components; this is a topological condition on $\Omega$, so
the corresponding complementary component in $\varphi(\Omega)$ is also not
isolated. If $\Omega$ is uniformly separated, then the connected components of
$R \setminus \Omega$ that are not singletons must be isolated. However, the
remaining complementary components need not be punctures; some or all of them
could be accumulation points of sequences of other complementary components.  For a general open Riemann surface $\Omega$ of countable connectivity and
finite genus, it is possible for an end of $\Omega$ to be represented in
one conformal embedding by a complementary component that is a non-isolated
singleton, yet in some other conformal embedding to be represented by a
non-singleton. 

\begin{figure}[h]
	\begin{center}
		\includegraphics[scale=0.5]{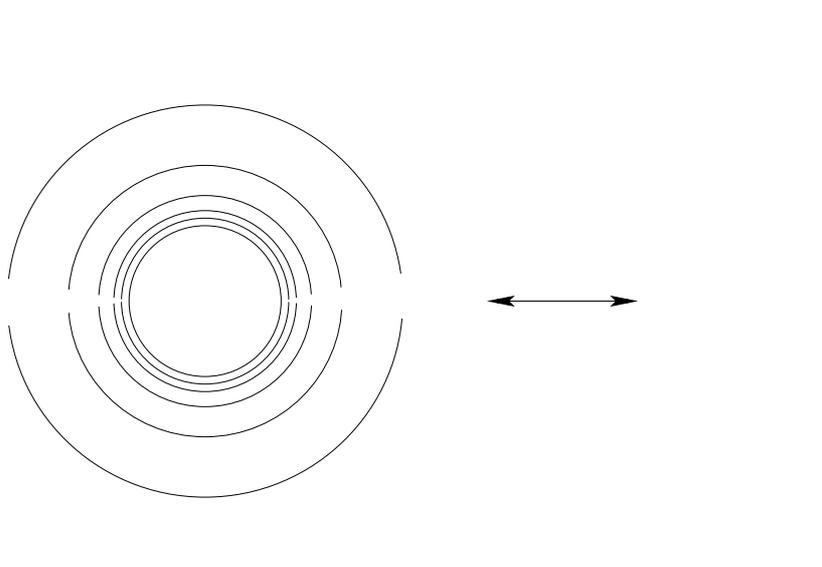}
	\end{center}
\caption{Two conformally equivalent countably connected domains.}
	\label{F: cavitation}
\end{figure}

 We can construct an example of this
\emph{cavitation} behavior\footnote{The term cavitation refers in fluid dynamics to the sudden formation of a bubble as
  a dissolved gas comes out of solution around a nucleation site.} as
follows.  Consider a subdomain $D$ of $\mathbb{C} \setminus \overline{\mathbb{D}}$ obtained
by removing countably many complex conjugate pairs of circular arcs,
where one arc of the $n^{\textup{th}}$ pair is 
\[ A_n \,=\,\left\{\left(1+2^{-n}\right)e^{it}\,: \theta_n < t < \pi -
      \theta_n\,\right\}\,, \quad \theta_n \searrow 0\,. \]  The domain
    \[D_n\, =\, \mathbb{C} \setminus \left( \mathbb{D} \cup \bigcup_{j=1}^n \left(A_j \cup \overline{A_j}\right) \right)\]
is conformally equivalent via a conformal map that fixes $z=2$ to a slit
domain $S_n$, which may be obtained from
$\mathbb{C}\setminus\{0\}$ by removing finitely many pairs of intervals $I_n$,
$\overline{I_n}$ contained in
the imaginary axis, arranged symmetrically about $0$, together with an
interval $(-\delta_n i, \delta_n i)$. See figure~\ref{F: cavitation} for a
schematic illustration. One
can show that if the angles $\theta_n$ are chosen to decrease to $0$
sufficiently fast then the domains $S_n$ converge in the Carath\'{e}odory
topology to a limit domain $S$ in which $0$ is a non-isolated singleton
complementary component, but every other complementary component is an
isolated interval of the imaginary axis.  Then $D$ is conformally equivalent
to $S$ and we have an example of cavitation: the singleton complementary
component $\{0\}$ of $S$ corresponds to the complementary component
$\overline{\mathbb{D}}$ of $D$. 

The idea is that by making $\theta_n$ small we can ensure that the extremal length of the family of
curves in $D$ joining $A_n$ to $\overline{A_n}$ is as small as we like. This
means we can make the gap between the intervals $I_n$ and $\overline{I_n}$ as
short as we like in comparison to the length of $I_n$. In fact to ensure that
the end represented in $D$ by the unit circle is represented in $S$ by a
singleton, it suffices to take $\theta_n = 2^{-n}$.

We will now prove Lemma~\ref{L: no cavitation}, which says that uniform separation prevents cavitation.

\begin{proof}
Suppose each non-singleton connected component $B_i$ of $R \setminus
\Omega$ is separated from the remaining complementary components by a ring
domain $A_i \subseteq \Omega$ of modulus $\epsilon$. Then the core
curve $\gamma_i$ of the ring domain $A_i$ has length $2\pi^2/\epsilon$ with
respect to the hyperbolic metric of $A_i$. By the Schwarz-Pick lemma, the
length of $\gamma_i$ with respect to the hyperbolic metric of $\Omega$ is also
at most $2\pi^2/\epsilon$. Consider the simple closed geodesic $g_i$ (with
respect to the hyperbolic metric of $\Omega$) that lies in the
free homotopy class of $\gamma_i$. Since $g_i$ is a length minimizer in its
free homotopy class, it is no longer than $\gamma_i$ in the hyperbolic metric
of $\Omega$. It follows that the ring domain $E_i$ bounded between $g_i$ and
$B_i$ has modulus at least $\epsilon/4$. Our reason for passing from the ring
domains $A_i$ to the ring domains $E_i$ is that the closures of
the $E_i$ in $R$ are pairwise disjoint, which was not necessarily true of the
$A_i$, and moreover the $E_i$ are intrinsically-defined subdomains of $\Omega$. 

 We now cut $\Omega$ along each $g_i$ to leave a domain $U
= \Omega \setminus \bigcup \overline{E_i}$.  We claim that for any conformal
embedding $\varphi$ of $\Omega$ into a compact Riemann
surface $R'$ of the same genus as $R$, there exists a quasiconformal
homeomorphism $Q: R \to R'$ such that $\varphi|_{U} = Q|_{U}$. To prove this,
consider a component $D_i = B_i \cup E_i$ of $R \setminus U$. It corresponds
to a component $D_i' = B_i' \cup \varphi(E_i)$ of $R' \setminus \varphi(U)$,
where $B_i'$ is a component of $R' \setminus \varphi(\Omega)$. We claim
that there is a $K$-quasiconformal homeomorphism $D_i \to D_i'$ whose
restriction to the curve $g_i$ agrees with $\varphi$. Here $K$ depends only
on $\epsilon$. To see this, let $\psi: D_i \to \mathbb{D}$ and $\chi: D_i' \to
\mathbb{D}$ be Riemann mappings, and note that $\partial D_i$ and $\partial
D_i'$ are analytic Jordan curves. Apply Lemma~\ref{L: distortion bound} to the conformal
homeomorphism \[\chi \circ \varphi \circ \psi^{-1}\,: \,\psi\left(E_i\right)
\to \chi\left(E_i'\right)\] and apply a Schwarz reflection to obtain a
$K$-quasiconformal homeomorphism $h: \mathbb{D} \to \mathbb{D}$ whose boundary
correspondence agrees with that of $\chi \circ \varphi \circ \psi^{-1}$, where
$K$ depends only on $\epsilon$. Now
\[\chi^{-1} \circ h \circ \psi\,:\, D_i \to D_i'\] is a $K$-quasiconformal
homeomorphism agreeing with $\varphi$ on the boundary, as required.

Gluing together $\varphi$ and the $K$-quasiconformal extensions to the
components $D_i$, we
obtain a $K$-quasiconformal homeomorphism \[ H: \Omega \cup \bigcup B_i \;\to\;
\varphi(\Omega) \cup \bigcup B_i'\,.\] 

Since $\Omega$ is countably connected, the complement $X = R \setminus
\left(\Omega \cup \bigcup B_i\right)$ is a countable union of singletons, and
therefore is a removable set for the solution of the Beltrami
equation. Thus $H$ extends to a homeomorphism $R \to R'$, and hence the
 complementary components of $R'$ that are not among the $B_i$ are singletons.

\end{proof}

\begin{theorem}\label{T: infinite} Suppose $\Omega$ is a domain in a compact
  Riemann surface $R$ such that $\Omega$ has countably infinite
  connectivity, has the same genus as $R$, and is
  uniformly separated. Then there is a conformal embedding $\varphi$ of $\Omega$ in a compact Riemann surface $S$ so that $\varphi(\Omega)$ is an intrinsic circle  domain in $S$. The pair $(S, \varphi)$ is unique up to conformal isomorphism.
\end{theorem}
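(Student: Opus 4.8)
The plan is to prove existence and uniqueness separately, following the corresponding arguments for Theorem~\ref{T: general} and bringing in the uniform separation hypothesis precisely where the presence of infinitely many complementary components would otherwise cause trouble. I expect the only substantial new point to be a uniform dilatation bound in the existence proof; everything else repeats the reasoning of \S\ref{S: existence} and \S\ref{S: uniqueness}, together with the fact that a countable closed set of points is removable for conformal homeomorphisms.

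\emph{Existence.} First apply Theorem~\ref{T: HS2} to replace $\Omega$ by a conformally equivalent circle domain $\Omega^{*}$ in a compact Riemann surface $R^{*}$. Since the complementary components of a circle domain are contractible, $R^{*}$ has the same genus as $\Omega$, hence the same genus as $R$, so Lemma~\ref{L: no cavitation} applies to this embedding and shows that $\Omega^{*}$ is uniformly separated in $R^{*}$, with a separation constant $\epsilon'$ depending only on the one for $\Omega$; in particular the non-singleton complementary components of $R^{*}\setminus\Omega^{*}$ are exactly the isolated ones, and every non-isolated complementary component is a single point. On each non-singleton complementary component $B$ I would then carry out exactly the local surgery of \S\ref{S: existence}: pass to an unbranched analytic covering $\pi:\mathcal{U}\to\Omega^{*}\cup B$, apply Theorem~\ref{T: HS1} to $\pi^{-1}(\Omega^{*})$ to obtain an equivariant conformal map $h$ onto a circle domain, and use Lemma~\ref{L: distortion bound} to build a quasiconformal homeomorphism $g:B\to C'$ extending the boundary correspondence of $h\circ\pi^{-1}$, setting $\mu|_{B}=g_{\overline{z}}/g_{z}$. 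The sets $B$ are pairwise disjoint, so these patches assemble into a Beltrami coefficient $\mu$ on $R^{*}$ that vanishes on $\Omega^{*}$.

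The key new point, which I expect to be the main obstacle, is to ensure $\|\mu\|_{\infty}<1$ despite there being infinitely many complementary components; this is exactly what the extra care taken in \S\ref{S: existence} was designed to permit. By Lemma~\ref{L: distortion bound}, the dilatation of $g$ is controlled by the modulus of the collar $E$ lying between $B$ and the geodesic $\alpha$ of the hyperbolic metric of $\Omega^{*}$ that separates $B$ from the other complementary components, and $\textup{mod}(E)=\pi^{2}/2\ell(\alpha)$, where $\ell(\alpha)$ is the hyperbolic length of $\alpha$ in $\Omega^{*}$. Uniform separation provides, for each $B$, a ring domain of modulus at least $\epsilon'$ inside $\Omega^{*}$ separating $B$ from the rest; its core curve has hyperbolic length at most $2\pi^{2}/\epsilon'$ in its own metric, hence at most $2\pi^{2}/\epsilon'$ in the metric of $\Omega^{*}$ by the Schwarz--Pick lemma, and therefore $\ell(\alpha)\le 2\pi^{2}/\epsilon'$ since $\alpha$ is the geodesic in that free homotopy class. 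Thus $\textup{mod}(E)\ge\epsilon'/4$ uniformly in $B$, so Lemma~\ref{L: distortion bound} gives a dilatation bound, and hence a bound $\|\mu\|_{\infty}\le k<1$, depending only on $\epsilon$. Solving the Beltrami equation yields a quasiconformal homeomorphism $f:R^{*}\to S$ onto a compact Riemann surface $S$ of the same genus, conformal on $\Omega^{*}$; by construction each non-singleton complementary component is carried to an intrinsic disc, and since $f$ is a homeomorphism it preserves isolation and hence the property of being a single point, so $f(\Omega^{*})$ is an intrinsic circle domain and $\varphi$ is the composition of the conformal equivalence $\Omega\to\Omega^{*}$ with $f$.

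\emph{Uniqueness.} Let $\Omega$ and $\Omega'$ be intrinsic circle domains in compact surfaces $S$ and $S'$ and let $f:\Omega\to\Omega'$ be a conformal homeomorphism; as in \S\ref{S: uniqueness} it suffices to extend $f$ to a conformal isomorphism $S\to S'$. Since the complementary components of an intrinsic circle domain are contractible, $f$ induces a bijection of ends, and because ``isolated'' and ``singleton'' are topological properties of a complementary component, $f$ matches the non-singleton complementary components $B_{i}$ of $S\setminus\Omega$ with those $B_{i}'$ of $S'\setminus\Omega'$. As $\Omega$ is infinitely connected, each $\Omega\cup B_{i}$ is hyperbolic and neither simply nor doubly connected, so the argument of \S\ref{S: uniqueness} goes through unchanged: passing to universal covers $\mathcal{U}_{i}$ and $\mathcal{U}_{i}'$ of $\Omega\cup B_{i}$ and $\Omega'\cup B_{i}'$, one lifts $f$ to a conformal homeomorphism of circle domains $\mathcal{U}_{i}\setminus\pi^{-1}(B_{i})\to\mathcal{U}_{i}'\setminus\pi'^{-1}(B_{i}')$, which by the uniqueness part of Theorem~\ref{T: HS2} is a M\"obius map $M_{i}$ (each of the two circle domains has only one non-isolated boundary component, so $M_{i}$ identifies $\mathcal{U}_{i}$ with $\mathcal{U}_{i}'$); one checks that $M_{i}$ is equivariant and hence descends to a conformal extension $\tilde{f}_{i}:\Omega\cup B_{i}\to\Omega'\cup B_{i}'$ of $f$. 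Gluing all the $\tilde{f}_{i}$ together with $f$ produces a conformal homeomorphism $\Omega\cup\bigcup_{i}B_{i}\to\Omega'\cup\bigcup_{i}B_{i}'$; its complement in $S$ is the countable closed set of singleton complementary components, which is removable for conformal homeomorphisms exactly as in the proof of Lemma~\ref{L: no cavitation}, so the map extends to the desired conformal isomorphism $S\to S'$.
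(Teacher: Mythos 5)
Your proposal is correct and follows essentially the same route as the paper: reduce to a circle domain via Theorem~\ref{T: HS2}, invoke Lemma~\ref{L: no cavitation} together with the collar/geodesic-length estimate to turn uniform separation into a uniform bound on the dilatation of each local extension (hence $\|\mu\|_\infty<1$), and run the uniqueness argument of \S\ref{S: uniqueness} component by component before removing the countable set of singleton components. The one small imprecision is your claim that being a singleton is a \emph{topological} property of a complementary component --- the cavitation example shows it is not in general, and the correct justification is that for the isolated ends it is a conformal invariant (an end is a puncture iff it admits annular neighborhoods of arbitrarily large modulus) while the non-isolated components of an intrinsic circle domain are singletons by definition --- but this does not affect the validity of the argument.
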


\begin{proof}
 The existence proof is similar to
that of Theorem~\ref{T: general}. Lemma~\ref{L: no cavitation} shows that when
we apply Theorem~\ref{T: HS2}, the resulting countably connected circle domain
$\Omega^* \subset R$
satisfies the hypotheses of Theorem~\ref{T: infinite}, in addition to having
the property that its non-singleton complementary components are geometric
discs. Then we note that by Lemma~\ref{L: distortion bound} the quasiconformal distortion of the extension $C \to B$ can be bounded in terms of $\epsilon$. Then the
  resulting Beltrami coefficient $\mu$ has $\|\mu\|_\infty < 1$, so
  we may still apply the measurable Riemann mapping theorem to obtain
  the desired conformal structure on $S$. Then Lemma~\ref{L: no cavitation}
  ensures that the non-isolated complementary components of $f(\Omega^*)$ are
  singletons, so that $f(\Omega^*)$ is an intrinsic circle domain. 

  The uniqueness proof
  goes through as before, but with one additional step. Because of the
  separation hypothesis there is no difficulty in extending a conformal
  homeomorphism $f: \Omega \to \Omega'$ between two countably-connected
  intrinsic circle domains across each non-singleton complementary
  component. We thus obtain a conformal homeomorphism between domains
  whose complements consist of countably many points, which in turn extends
  to a conformal homeomorphism 
  $\tilde{f}: S \to S'$ between the ambient Riemann surfaces. 
\end{proof}

It is likely that the condition of uniform separation can be weakened by
making use of results on existence and uniqueness of solutions to the Beltrami
equation in the case where the Beltrami coefficient has norm $1$ but the
measure of the set on which the distortion is large is controlled.

\section{Mixtures of intrinsic and extrinsic circles}\label{S: mixed}

Now we present a generalization of Theorems~\ref{T: general}~and~\ref{T: infinite} in which the conditions on the complementary components of
$\Omega$ in $S$ are mixed. That is, some components are punctures or geometric
discs in the natural geometry of the ambient compact Riemann surface $S$,
while each of the others is an intrinsic disc, i.e. a geometric disc with respect to its own union with $\Omega$.

\begin{theorem}\label{T: mixed}
Let $\Omega$ be an open Riemann surface with finite genus and at most countably
many ends. Let $K_i$, $i \in I$ be some of the ends of $\Omega$, none of them
punctures, where the index set $I$ may be either finite or countably
infinite. Suppose that for some $\epsilon > 0$ and for each $i \in I$ there is
a ring domain $A_i$ contained in $\Omega$, with modulus at least $\epsilon$, that separates
the end $K_i$ from the other ends of $\Omega$. Then $\Omega$ is
conformally equivalent to a domain $\varphi(\Omega)$ in a compact Riemann surface $S$,
with complementary components $L_i$ corresponding to $K_i$, such
that $\varphi(\Omega) \cup \bigcup_{i \in I} L_i$ is a circle domain in $S$
and for each $i \in I$, $L_i$ is either a singleton or a closed geometric disc
with respect to the domain $\varphi(\Omega) \cup L_i$. Such a pair $(S,\varphi)$ is unique up to conformal isomorphism.
\end{theorem}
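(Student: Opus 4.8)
\textbf{Proof proposal for Theorem~\ref{T: mixed}.}

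The plan is to run the existence argument of Theorem~\ref{T: general} and Theorem~\ref{T: infinite}, but carefully tracking two different kinds of complementary components and doing surgery only on the ``intrinsic'' ones indexed by $I$. First I would apply Theorem~\ref{T: HS2} to embed $\Omega$ conformally onto a circle domain $\Omega^*$ in a compact Riemann surface $R$, so that every complementary component of $R \setminus \Omega^*$ is either a point or a genuine geometric disc in $R$. The ends $K_i$, $i \in I$, correspond to complementary components $B_i$ of $\Omega^*$ in $R$; since each $K_i$ was separated from the other ends by a ring domain of modulus at least $\epsilon$, the same is true in $R$, so the $B_i$ are isolated, non-singleton, and can be handled by local surgery exactly as in \S\ref{S: existence}. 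The complementary components \emph{not} indexed by $I$ — the other geometric discs and all the punctures of $R \setminus \Omega^*$ — are left untouched: we set the Beltrami coefficient $\mu$ to be $0$ there.

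Next, for each $i \in I$ I would construct $\mu|_{B_i}$ precisely as in the existence proof of Theorem~\ref{T: general}: pass to the universal cover $\pi: \mathcal{U} \to \Omega^* \cup B_i$, apply Theorem~\ref{T: HS1} to $\pi^{-1}(\Omega^*)$ to obtain a circle domain $V$ with equivariant action of the deck group, and then use Lemma~\ref{L: distortion bound} together with the ring domain $E_i$ of modulus $\ge \epsilon/4$ (obtained from $A_i$ via the hyperbolic geodesic in its free homotopy class, exactly as in the proof of Lemma~\ref{L: no cavitation}) to build a quasiconformal homeomorphism $B_i \to C_i'$ whose boundary values match $h \circ \pi^{-1}$, with dilatation bounded in terms of $\epsilon$. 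This is the only place the separation hypothesis enters: it gives a uniform bound $K = K(\epsilon)$ on the dilatation of each local correction, hence $\|\mu\|_\infty < 1$ even when $I$ is countably infinite. Solving the Beltrami equation on $R$ gives a quasiconformal $f: R \to S$ that is conformal on $\Omega^*$ (and on all the untouched complementary components) and makes each $f(B_i)$ a geometric disc in $f(\Omega^* \cup B_i)$. The components with trivial $\mu$ map under $f$ to components of the same conformal type relative to their union with $f(\Omega^*)$ — a puncture stays a puncture, and a geometric disc in $R$ corresponds (via a conformal, hence M\"obius after lifting, identification of the relevant universal covers) to a geometric disc in $S$ — so $f(\Omega^*) \cup \bigcup_{i\in I} f(B_i)$ is a circle domain in $S$ and each $f(B_i)$ is an intrinsic disc. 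Here one must invoke Lemma~\ref{L: no cavitation}, applied to the circle-domain part, to be sure the non-isolated complementary components remain singletons so that no removability problem arises when extending $f$.

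For uniqueness, suppose $\varphi, \varphi'$ give two such pairs $(S, \varphi)$, $(S', \varphi')$, and let $g = \varphi' \circ \varphi^{-1}$ be the induced conformal homeomorphism between the two domains in $S$, $S'$. The extrinsic complementary components (punctures and geometric discs in the ambient surface) are dealt with by the argument already used for Theorem~\ref{T: HS2}, or directly: a puncture extends by removability, and for each ambient geometric disc one lifts $g$ to the universal covers of the relevant one-complementary-component surfaces and uses the uniqueness parts of Koebe's theorem / Theorem~\ref{T: HS2} to get a M\"obius extension that descends. For the intrinsic components $L_i$, $i \in I$, the uniform separation again lets us extend $g$ across each $L_i$ individually, exactly as in the uniqueness proofs of Theorems~\ref{T: general} and~\ref{T: infinite}: pass to universal covers of $\Omega \cup L_i$, observe that the preimages of the $L_i$ are honest round discs, apply uniqueness in Theorem~\ref{T: HS1}, and check equivariance under the deck group to descend the M\"obius map. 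Gluing all these local extensions along $\Omega$ produces a homeomorphism $S \to S'$ that is conformal off a countable set of singletons, hence conformal everywhere.

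The main obstacle is bookkeeping rather than a new idea: one must verify that the uniform $\|\mu\|_\infty < 1$ bound really does follow from the modulus-$\epsilon$ hypothesis when $I$ is infinite (this is where Lemma~\ref{L: distortion bound} and the passage $A_i \rightsquigarrow E_i$ from Lemma~\ref{L: no cavitation} are essential), and that the untouched extrinsic components genuinely keep their type under the quasiconformal map $f$ — in particular that a geometric disc relative to a subsurface is a conformal invariant of the pair, so that conformality of $f$ near such a component suffices. Neither point requires anything beyond the lemmas and theorems already established, so the proof is a matter of assembling the earlier ingredients in the right order.
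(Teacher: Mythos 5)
There is a genuine gap in your existence argument, at exactly the point you flagged as needing verification. You leave $\mu\equiv 0$ on the complementary components not indexed by $I$ and claim that, because $f$ is conformal near them, ``a geometric disc in $R$ corresponds \dots\ to a geometric disc in $S$.'' But the conclusion required for those components is \emph{extrinsic}: they must be points or geometric discs with respect to the constant-curvature metric of the new ambient compact surface $S$ itself, not with respect to their union with $\varphi(\Omega)$. That is a global property of $S$, and the surgery on the $B_i$, $i\in I$, has changed the global conformal structure. Already in genus $0$: a quasiconformal self-map of $\Chat$ that is conformal on a neighborhood of a round disc $D$ is, on that neighborhood, merely some injective holomorphic map, not a M\"obius map, so $f(D)$ is bounded by an analytic Jordan curve but is in general not a spherical disc. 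The intrinsic condition on the $L_i$, $i\in I$, \emph{is} a conformal invariant of the pair $(\Omega\cup L_i,L_i)$ and therefore survives any map conformal on that union; the extrinsic condition on the remaining components is not, and conformality of $f$ near them does not suffice. (The problem is worst when the non-$I$ components are infinite in number and accumulate, which the hypotheses permit, since only the $K_i$ are assumed uniformly separated.)

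The paper closes this gap by a second application of Theorem~\ref{T: HS2} at the end: after solving the Beltrami equation, it uniformizes $f\bigl(\Omega^*\cup\bigcup_{i\in I}B_i\bigr)$ onto a circle domain in a compact surface $S$ via a conformal map $g$. This makes the non-$I$ components into genuine points or geometric discs of $S$, while $g$ is conformal on each $f(\Omega^*)\cup f(B_i)$ and hence an isometry for the relevant hyperbolic metrics, so the intrinsic-disc property of the $f(B_i)$ is preserved. Your construction of $\mu$ on the $B_i$ (universal cover, Theorem~\ref{T: HS1}, Lemma~\ref{L: distortion bound}, the passage $A_i\rightsquigarrow E_i$ to get the uniform dilatation bound $K(\epsilon)$ and hence $\|\mu\|_\infty<1$ for infinite $I$) matches the paper, and your uniqueness argument --- extend across each intrinsic $L_i$ as in \S\ref{S: uniqueness}, then invoke He--Schramm uniqueness for the resulting countably connected circle domains --- is essentially the paper's. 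Only the final re-uniformization step is missing, and without it the stated conclusion about $\varphi(\Omega)\cup\bigcup_{i\in I}L_i$ being a circle domain in $S$ does not follow.
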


\begin{proof}
 We begin by replacing $\Omega$ by a conformally equivalent circle domain
 $\Omega^*$ in a compact Riemann surface $R$, as in Theorem~\ref{T: HS2}.  Apply the construction of section~\ref{S: existence} to each of the components $K_i$, $i \in I$, to obtain a Beltrami differential $\mu(z) \frac{dz}{d\overline{z}}$ supported on $\bigcup K_i$. In the case where $I$ is infinite, the separation condition guarantees that $\|\mu\|_\infty < 1$, as it did for Theorem~\ref{T: infinite}. Solving the Beltrami equation $\mu(z) f_z  = f_{\overline{z}}$ on $R$ gives a new compact Riemann surface $S'$ with a homeomorphism $f: R \to R'$, conformal away from the preimages of the $K_i$, such that $f(K_i)$ is a closed hyperbolic disc with respect to the hyperbolic metric of $f(U) \cup f(K_i)$. The domain $f\left(U \cup \bigcup K_i\right)$ is countably connected so by Theorem~\ref{T: HS2} there is a conformal homeomorphism $g: f\left(U \cup \bigcup K_i\right) \to \Omega^*$, where $\Omega^*$ is a circle domain in a compact Riemann surface $S$. The image $\Omega = g(f(U))$ is the required domain, and $L_i = g(f(K_i))$. Indeed, $g$ is conformal on $U \cup f(K_i)$, so it is an isometry from the hyperbolic metric of $U \cup f(K_i)$ to the hyperbolic metric of $\Omega \cup L_i$.

 To prove uniqueness, suppose that $f: \Omega \to \Omega'$ is a conformal
 homeomorphism between two domains $\Omega \subset S$ and $\Omega' \subset S'$, each satisfying the conditions of the
 theorem. Let the complementary components of $\Omega'$ be $L_i'$, $i \in I$. Apply the argument of \S\ref{S: uniqueness} to extend $f$ to a
 conformal homeomorphism of countably connected circle domains $f: \Omega \cup \bigcup L_i \to \Omega' \cup \bigcup L_i'$. This must be the restriction of a conformal map from $S$ to $S'$ by the final part of He and Schramm's Theorem~\ref{T: HS2}.
\end{proof}

\section{Numerical approximation using circle packings}\label{S: pictures}

The rest of this paper concerns the practical numerical approximation of
finitely-connected intrinsic circle domains in the sphere. The idea is roughly to follow the steps
of the existence proof, performing each step numerically.

Firstly, we need a way to approximate circle domains having Fuchsian
symmetry. This already presents us with a choice of methods, as there are
several numerical methods for computing circle domains. Finitely connected
circle domains can be approximated using iterated Riemann mapping, the
individual steps of which can be carried out using a number of different
methods for computational conformal mapping. However, we must deal with an
infinitely connected circle domain, so it seems at first sight that we must
make decisions about which complementary components to ignore at any
particular stage of the approximation. We will see that this truncation can in
fact be avoided.

Secondly, if we follow the existence proof directly, it seems that we need to
compute a quasiconformal extension numerically, find its dilatation and then
solve a Beltrami equation on a larger domain. In practice, these steps can be
combined into a single \emph{conformal welding} step. 

 A final practical difficulty may arise in converting the output of each step
 into the input for the next step, especially when the steps require different
 types of grid.

Fortunately, circle packing is a numerical approximation method that offers
solutions to all of these challenges. Although circle packing is not
often used for high-precision conformal mapping problems on account of its
relatively slow convergence, it is particularly well suited to the
approximation of circle domains, especially those with Fuchsian symmetry. It
is also a practical tool for numerical conformal welding. We were able to
carry out all of the steps of an approximation procedure for computing
intrinsic circle domains using the \texttt{CirclePack} software \cite{CP}
written by Ken Stephenson et al. For an introduction to circle packing and its use as a computational tool, we refer the reader to Stephenson's monograph~\cite{St}.   

\subsection{Overview of circle packing}

Let $T$ be a finite graph embedded in the plane. A circle packing of $T$ in the plane is
a collection $\mathcal{P}$ of circular discs $C_v$ with disjoint interiors, one corresponding to each
vertex $v$ of $T$, such that whenever vertices $v$ and $w$ are adjacent in $T$
the discs $C_v$ and $C_w$ are tangent. We call $T$ the \emph{nerve} of
$\mathcal{P}$.  We may also circle pack in the sphere, using discs in the
spherical metric, or in the hyperbolic plane, using hyperbolic circles and
possibly also horocircles on the boundary. The \emph{carrier} of $\mathcal{P}$ is the geometric complex
formed by connecting the centers of neighboring circles by geodesic
segments. According to the Koebe-Andreev-Thurston theorem, when $T$ is any
triangulation of the sphere, there exists a circle packing of $T$, and it is
unique up to M\"{o}bius maps and reflection. In this case, the carrier of
$\mathcal{P}$ is the entire sphere. Since we want to use circle packings to
approximate conformal structures, we remove the reflection ambiguity by
imposing a fixed orientation. 

There is a beautiful algorithm due to Bill Thurston for computing a
circle packing from a given nerve. It works by removing one vertex and packing
the remaining triangulation into the unit disc. This is achieved by
computing the \emph{hyperbolic packing label} for the packing; this is the function
which assigns a hyperbolic radius to each vertex of the triangulation, in such
a way that the boundary circles are given infinite radius, so that they
correspond to horocircles, internally tangent to the unit circle, and all
interior circles have finite hyperbolic radius. There is a unique packing
label that results in an angle sum of $2\pi$ at each interior
vertex. Thurston's algorithm approximates the correct label as the limit of a
pointwise increasing sequence of labels. Once the correct packing label is
known, the circles of the given radii can be laid out in the hyperbolic plane
iteratively so that each satisfies the appropriate tangency conditions. 

\subsection{Outline of the approximation algorithm}
Here we outline a numerical method for approximating
intrinsic circle domains. Let $\Omega$ be a given finitely connected domain in
the Riemann sphere. Begin by taking a circle packing $\mathcal{P}$ such that the
interiors of all the circles of $\mathcal{P}$ are contained in $\Omega$ and the
complementary components of $\Omega$ are separated by the carrier of
$\mathcal{P}$. All subsequent calculations depend only on the nerve of
$\mathcal{P}$, so it is in this step that we have captured a discrete
approximation to the conformal equivalence class of $\Omega$. The quality of the final approximation of the intrinsic circle domain $\Omega^*$ conformally equivalent to $\Omega$ will depend on the mesh of $\mathcal{P}$ (the size of its largest circle) and the maximum distance of $\partial \Omega$ from the carrier of $\mathcal{P}$.

  We perform a sequence of circle packing computations in order to construct a
  triangulation of the sphere together with an embedding of the nerve of
  $\mathcal{P}$ as a subcomplex. When we compute the circle packing
  $\mathcal{Q}$ of the spherical triangulation, the carrier of the embedded
  sub-packing gives an  approximation to the intrinsic circle domain $\Omega$
  conformally equivalent  to $U$. We can interpolate the mapping of circle centers of $\mathcal{P}$ to the corresponding circle centers in $\mathcal{Q}$
  to give a \emph{polyhedral} embedding of the carrier of $\mathcal{P}$ into
  the carrier of $\mathcal{Q}$. Since we are mapping from the plane to the
  sphere, we cannot map in a piecewise affine fashion, so instead we map in a
  piecewise affine fashion to the polyhedron in $\mathbb{R}^3$ whose vertices
  are the vertices of $\mathcal{Q}$ in $S^2$, and then project radially
  outwards to $S^2$.  This map gives a homeomorphism which approximates the
  conformal map from $U$ to $\Omega$ and is locally quasiconformal.

 We will explain informally why each step of the computation provides an
 arbitrarily good  approximation to a conformal map occurring at the corresponding step in the
 constructive proof of Theorem~\ref{T: main}. Note that the computation
 consists of finitely many circle packing steps. In each step we are
 approximating the solution of a problem whose solution depends continuously
 on its data. This should suffice to prove local uniform convergence of the
 approximations to the conformal map from $\Omega$ to $\Omega^*$, but we will not attempt to estimate the rate of convergence.

\subsection{Approximating circle domains via circle packing}

Suppose we are given a bounded finitely connected domain $\Omega$ in the complex
plane. We find a sequence of circle packings $\mathcal{P}_n$ in $\Omega$ whose
carriers exhaust $\Omega$. To construct $\mathcal{P}_n$ we \emph{cut out} a
portion of the regular hexagonal circle packing with circles all of radius
$\epsilon = 2^{-n}$. This means that we keep only those circles whose
interiors are entirely contained in $\Omega$. Stephenson \cite{St} describes this as using
$\partial\Omega$ as a `cookie cutter'. We have to take some care at the
boundary: we retain only the largest connected component of the hexagonal
packing that remains, discarding any peripheral islands and iteratively
removing vertices of degree one until we are left with a connected
triangulation. When we take $n$ large enough, the boundary components
of this triangulation will be in one-to-one correspondence with the complementary components of $\Omega$. 

Next, to each cycle of boundary vertices of $\mathcal{P}_n$ we adjoin a new `ideal' vertex, adjacent to all
of the vertices in the cycle, and in this way we obtain a triangulation of the
sphere. By the Koebe-Andreev-Thurston theorem there is a circle packing whose
nerve is this triangulation, and it is unique up to M\"{o}bius
transformations. Consider what happens to this spherical packing as we let
$n$ tend to $\infty$. Using quasiconformal distortion estimates
for packings of bounded vertex degree, one can show
that after a suitable M\"{o}bius normalization, the circles corresponding to
the added ideal vertices converge to limit circles of positive radius, while
the maximum radius of the all the other circles tends to zero uniformly.

 Let $f_n$ be the polyhedral map from the hexagonal
cut-out packing $\mathcal{P}_n$ to the carrier of the spherical packing, normalized so that
three chosen points in $\Omega$ are fixed by $f_n$.  It turns out that as $n \to
\infty$, $f_n$ converges locally uniformly on $\Omega$ to a conformal map
from $\Omega$ to a circle domain. To prove this, we can use the hex packing
lemma of Rodin and Sullivan \cite{RS}. This says that the quasiconformal
dilatation of $f_n$ converges locally uniformly to $0$ on
$\Omega$. The Rodin-Sullivan bound for the quasiconformal dilation on a
given face of the triangulation depends only on the number of layers of
hexagonal packing surrounding it, and tends to $0$ as the number of layers
tends to infinity.  Let $\Omega'$ be any subdomain of $\Omega$ bounded by Jordan
curves in $\Omega$, such that $\Omega'$ is homeomorphic to
$\Omega$. The dilatation bound shows that the images of the maps $f_n$
restricted to $\Omega'$ remain
within a compact subset of the Teichm\"{u}ller space of multiply connected
domains homeomorphic to $\Omega$. Any subsequential limit of
the sequence $f_n$ must be a conformal homeomorphism, and its image
must be a circle domain. Given the normalization, there is a unique such
homeomorphism. It follows that the sequence $f_n$ converges locally uniformly
as $n \to \infty$. 

We can augment the image packings by inverting across all the `ideal'
circles, adding circles of degree $4$ in the resulting four-sided
interstices to maintain a triangulation. By considering moduli of ring domains
in the resulting packing it is possible to show that the maximum radius of any
non-ideal circle in the image packing also converges to $0$ as $n \to \infty$.  

We have now seen how to compute an arbitrarily good approximation to the
circle domain $\Omega^*$ conformally equivalent to $\Omega$. We may assume
that $\Omega^*$ is contained in the plane. 

\begin{figure}[!h]
	\begin{center}
		\includegraphics[scale=0.5]{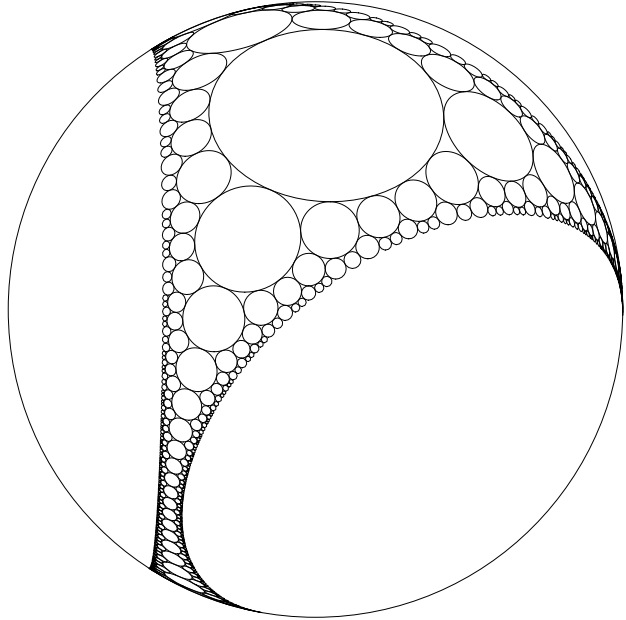}
	\end{center}
\caption{A circle packing of a circle domain in the Riemann sphere. Both the
  domain and the circle packing have sixfold symmetry.}
	\label{F: circle domain}
\end{figure}

\subsection{Circle domains with Fuchsian symmetry}

 Suppose $B$ is one of the
complementary components of $\Omega^*$. Let $\pi: \mathbb{D} \to \Omega^* \cup
B$ be an analytic universal covering map. To compute an approximation to the
circle domain conformally equivalent to $\mathbb{D} \setminus \pi^{-1}(B)$, we use the same trick again. We have a triangulation of $\Omega^*$, to which we add just one ideal vertex $v_B$ corresponding to the complementary component $B$. Now we apply Thurston's algorithm to compute the \emph{hyperbolic} packing label for the resulting multiply connected triangulation. If we lay the circles out according to this packing label, we get a Fuchsian monodromy group. Call the resulting packing $\mathcal{P}_B$. The circle corresponding to $v_B$ and its translates under the monodromy group approximate the bounded components of the required circle domain. For the conformal welding step that follows we will only need to know the hyperbolic radii for the circles corresponding to the vertex $v_B$ and its neighbors. It is not necessary to perform the layout routine.

\begin{figure}[!h]
	\begin{center}
		\includegraphics[scale=0.5]{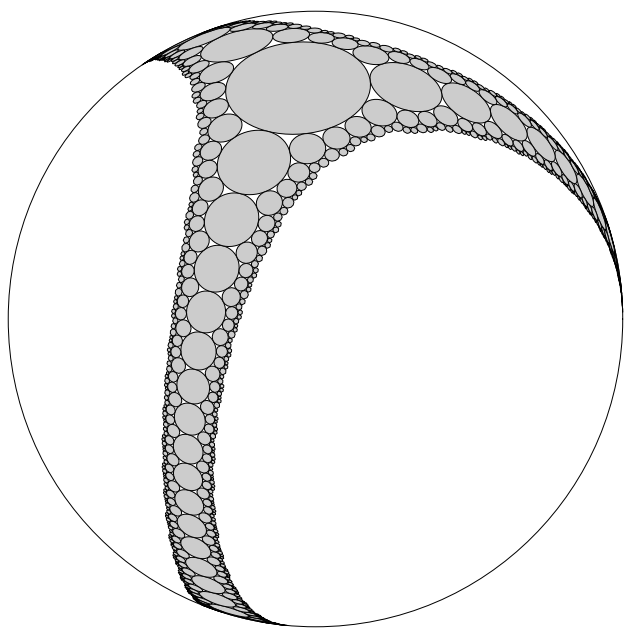}
	\end{center}
\caption{A circle packing approximation of the intrinsic circle domain
  conformally equivalent to the circle domain in Fig.~\ref{F: circle domain},
  with the circle packing of the same complex shown.}
	\label{F: intrinsic circle domain}
\end{figure}

\subsection{Conformal welding via circle packing}

Given a Jordan curve $\gamma$ in the Riemann sphere, we can compute the Riemann mappings of the unit disc onto the interior and the exterior of $\gamma$. The boundary values of these Riemann mappings give two different continuous maps of the unit circle onto $\gamma$, $\phi_{int}$ and $\phi_{ext}$. The shape of $\gamma$ is encapsulated in the boundary correspondence $\phi_{ext}^{-1} \circ \phi_{int}$, which is an orientation-reversing homeomorphism of the unit circle. Conformal welding is the process of recovering $\gamma$ (up to a M\"{o}bius map) from the boundary correspondence.

The \texttt{CirclePack} software includes a package for performing approximate
conformal welding, following the procedure described by Williams \cite{W}. The idea is to paste two combinatorial closed discs
together along their boundaries, according to a best possible combinatorial
approximation of the given boundary correspondence,
to produce a triangulation of the sphere. We then compute the circle
packing for this triangulation. For example, we may start with two maximal packings of the unit disc
(meaning that the boundary circles are internally tangent to the unit circle),
and use the arc length around the unit circle as a guide for pasting together
the two triangulations, introducing new vertices where necessary to keep
control of the vertex degrees.

In our application to intrinsic circle domains, we weld a combinatorial closed
disc to the boundary cycle corresponding to each complementary component $B$
of $\Omega$, using the arc length around the circle corresponding to $v_B$ in
the packing $\mathcal{P}_B$ and the trivial boundary correspondence
$e^{i\theta} \mapsto e^{-i \theta}$. A suitable choice of combinatorial closed
disc would be a large section of the regular hexagonal packing, for then the
hex packing lemma of Rodin and Sullivan can be applied to show that the
desired welding and the computed welding differ by a quasiconformal map which
has small dilatation except on an annulus of small modulus and small area
covering the cycle of edges along which the welding is performed.  After
performing a welding for each boundary component, we simply apply Thurston's
algorithm to compute a circle packing of the resulting spherical
triangulation. In the case where the ambient Riemann surface $R$ has positive
genus, we would compute a packing of the universal cover of the welded complex at this stage to get a circle packing approximation to the Riemann surface $S$.

 The finished spherical circle packing gives an approximation to the map
 $\varphi$ and to the desired conformal structure on the interior of each complementary component of $\varphi(\Omega)$, and on the interior of the image of
 $\varphi(\Omega)$. 

\subsection{Convergence}

 We sketch a proof that the approximation scheme described above does
 converge locally uniformly on the domain $\Omega^*$ to the conformal mapping
 to an intrinsic circle domain, after suitable normalization. The strategy is
 to show that every map appearing in Figure~\ref{F: existence proof} is well
 approximated by the corresponding polyhedral map between circle packings.
 Then since each step depends continuously on its input data, we will find
 that the polyhedral map from $\mathcal{P}^*$ to $S^2$ converges locally
 uniformly on $\Omega^*$ to the desired conformal mapping.

 On the subcomplex of the packing representing the intrinsic circle domain, the hex-packing distortion estimate of
 Rodin and Sullivan shows that the polyhedral mapping is quasiconformal with
 dilatation that converges locally uniformly to zero.  We
 can apply a similar estimate on the interior of each complementary
 component. 

 It remains to show that the conformal welding step is well approximated by
 the discrete conformal welding. To do this we have to control the
 conformal modulus of the image of a narrow annular neighborhood of each
 boundary component, where the circles are not deep enough in the hex packing
 for the Rodin-Sullivan estimate to give the bound we need.  To do this it suffices to bound the maximum degree of the vertices
 appearing in the final triangulation. This is because the Rodin-Sullivan ring
 lemma allow the quasiconformal distortion of polyhedral maps between circle
 packings to be bounded in terms of the maximum degree. We can arrange that all vertices have degree $6$ except those involved
 in the combinatorial welding. The largest degree arising when we weld using
 the hyperbolic radii of the neighbors of $v_B$ in the packing $\mathcal{P}_B$
 can be bounded in terms of the ratio between the radii of the
 largest and smallest circles adjacent to $v_B$. Since we began with a
 hex-packing of $\Omega$ with circles all of equal radius, this ratio is
 controlled in the limit by the maximum and minimum value of the derivative on
 the boundary $\partial{B}$ of the conformal map $h \circ \pi^{-1}$.

\begin{figure}[!h]
	\begin{center}
		\includegraphics[scale=0.5]{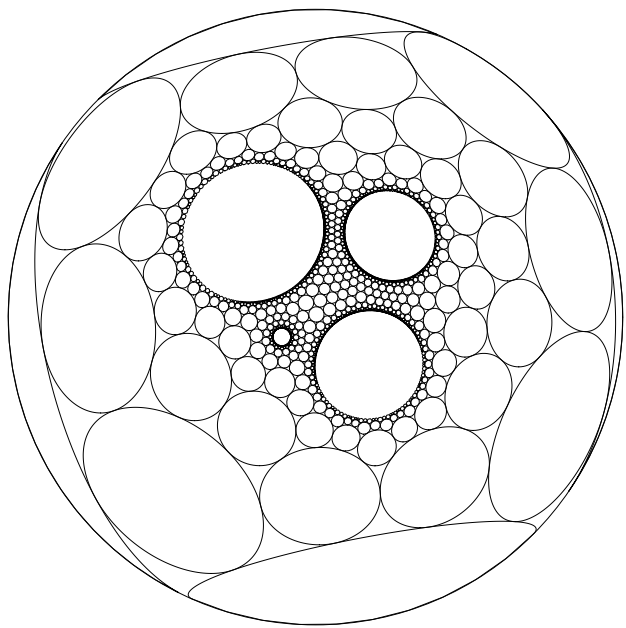}
	\end{center}
\caption{A circle packing of a circle domain with four complementary components.}
	\label{F: circle domain 2}
\end{figure}

\begin{figure}[!h]
	\begin{center}
		\includegraphics[scale=0.5]{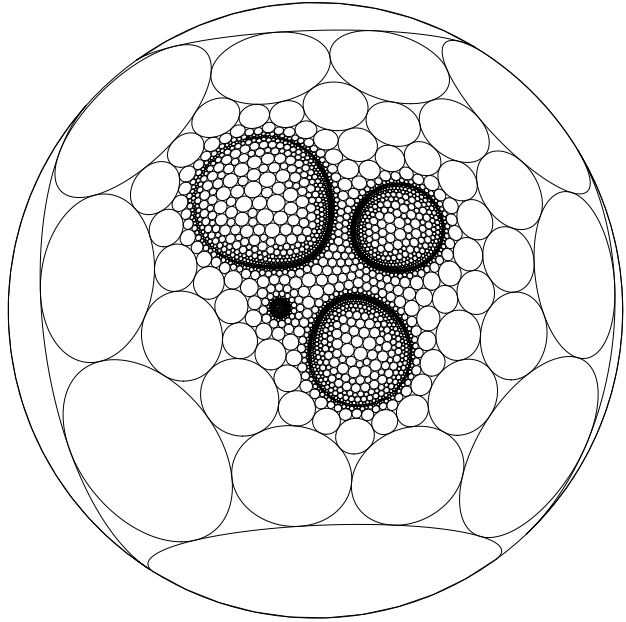}
	\end{center}
\caption{The spherical circle packing  used to compute the intrinsic circle domain conformally equivalent to the circle domain shown in Fig.~\ref{F: circle domain 2}. }
	\label{F: intrinsic 2}
\end{figure}

\subsection{Practical considerations}
Examining the figures, the reader will note that in our numerical experiments
we have not restricted ourselves to using hex packings as our starting
point. Instead we have used packings with few layers of degree $7$ vertices
near the boundary, so that there can are many very small circles on the boundary
 without needing a very large number of circles in total. This was done to reduce the errors from the combinatorial
welding step. We pay for this by having larger circles far from the boundary,
and losing the rigorous distortion bounds, but this seems a worthwhile trade since we expect the Schwarzian derivative of
the map that we are approximating to be small on this region. In
Figures~\ref{F: circle domain} and~\ref{F: intrinsic circle domain} there is
a circle of degree $15$ fixed by a rotational symmetry of the packing of order
$3$. The Schwarzian derivative of the map we are approximating must certainly
vanish at the center of this circle, by symmetry considerations. Apart from
two circles of this type, the maximum degree in the packing is $10$. In the
packings in Figures~\ref{F: circle domain 2} and~\ref{F: intrinsic 2}, the
maximum degree is $7$. 


Finally we comment that it may be more satisfactory from the point of view of
computation, and also to simplify the convergence proof, to perform the
discrete conformal welding using packings with
specified overlap angles rather than restricting ourselves to tangency
packings. However, we have not implemented this in practice.

\end{document}